\newcommand{\nc}{\newcommand}
\newtheorem{thm}{Theorem}%[section]
\theoremstyle{plain}
\nc{\bthm}{\begin{thm}} \nc{\ethm}{\end{thm}}
\newtheorem{prop}[thm]{Proposition}
\nc{\bprp}{\begin{prop}} \nc{\eprp}{\end{prop}}
\newtheorem{fact}[thm]{Fact}
\nc{\bfct}{\begin{fact}} \nc{\efct}{\end{fact}}
\newtheorem{prob}[thm]{Problem}
\nc{\bprb}{\begin{prob}} \nc{\eprb}{\end{prob}}
\newtheorem{lem}[thm]{Lemma}
\nc{\blem}{\begin{lem}} \nc{\elem}{\end{lem}}
\newtheorem{claim}[thm]{Claim}
\nc{\bclm}{\begin{claim}} \nc{\eclm}{\end{claim}}
\newtheorem{cor}[thm]{Corollary}
\nc{\bcor}{\begin{cor}} \nc{\ecor}{\end{cor}}
\newtheorem{conj}[thm]{Conjecture}
\nc{\bcnj}{\begin{conj}} \nc{\ecnj}{\end{conj}}
\theoremstyle{definition}
\newtheorem{defn}[thm]{Definition}
\nc{\bdfn}{\begin{defn}} \nc{\edfn}{\end{defn}}
\newtheorem{observation}[thm]{Observation}
\nc{\bobs}{\begin{observation}} \nc{\eobs}{\end{observation}}
\theoremstyle{remark}
\newtheorem{rem}[thm]{Remark}
\nc{\brem}{\begin{rem}} \nc{\erem}{\end{rem}}
\newtheorem{cnv}[thm]{Convention}
\nc{\bcnv}{\begin{cnv}} \nc{\ecnv}{\end{cnv}}
\newtheorem{exam}[thm]{Example}
\nc{\bexm}{\begin{exam}} \nc{\eexm}{\end{exam}}
\newtheorem{question}[thm]{Questuion}
\nc{\bpf}{\begin{proof}} \nc{\epf}{\end{proof}}
\nc{\be}{\begin{enumerate}}
	\nc{\ee}{\end{enumerate}}
\nc{\bi}{\begin{itemize}}
	\nc{\itm}{\item}
	\nc{\ei}{\end{itemize}}
\nc{\invlim}{\lim_{\leftarrow}}
\nc{\dirlim}{\lim_{\rightarrow}}
\nc{\mm}{\mathbf{m}}
\nc{\nn}{\mathbf{n}}
\nc{\FF}{\mathcal{F}}
\nc{\CC}{\mathcal{C}}
\par\end{center}{\huge \par}
\par\end{center}{\large \par} 
\par\end{center}{\large \par}
\par\end{center}{\large \par}
\par\end{center}{\large \par} 	
\par\end{center}{\Large \par}
\par\end{center} 	
\title{Profinite Completion of Free Pro-$\mathcal{C}$ Groups}
\author{Tamar Bar-On}
\date{\today}
\begin{document}
	
	\maketitle
	
	\begin{abstract}
		We investigate the ability of a free pro-$\CC$ group of infinite rank to abstractly solve abstract embedding problems, and conclude that for some varieties $\CC$, the profinite completion of any order, of a free pro-$\CC$ group of infinite rank, is a free pro-$\CC$ group as well, of the corresponding rank. 
	\end{abstract}
\section*{Introduction}
During the paper, $\mm$ will be an infinite cardinal, and $\CC$ be a variety of finite groups, i.e, a collection of finite groups which is closed under taking subgroups, quotients and finite products. The free pro-$\CC$ group of rank $\mm$ is defined as follows:

Consider a discrete topological space of cardinality $\mm$, $X$. Denote by $X\cup \{\ast\}$ its one-point compactification. One easily checks that this is a pointed profinite space- i.e, a topological space which is compact, Hausdorff and totally disconnected, with $\ast$ being the distinguished point. The free pro-$\CC$ group over $X\cup \{\ast\}$, also known as the free pro-$\CC$ group over the set $X$ converging to 1, is a pro-$\CC$ group $F_{\CC}(\mm)$, considered as a pointed profinite space with the identity element serving as the distinguished point, together with a pointed continuous function $i:X\cup \{\ast\}\to F_{\CC}(\mm)$ such that the following universal property is satisfied:

For every continuous pointed map $f:X\cup \{\ast\}\to G$ where $G$ is any pro-$\CC$ group considered as a pointed space in the same way, there exists a unique continuous homomorphism $\bar{f}:F_{\CC}(\mm)\to G$ which makes the following diagram commutative:
	\[
\xymatrix@R=14pt{ F_{\CC}(\mm)\ \ar[rd]^{\bar{f}}& \\
	X\cup \{\ast\}\ \ar[u]_{i} \ \ar[r]_f& G\\
}
\]
It is not hard to prove that the function $i$ is one-to-one, so one can think of $X$ as a subset of $F_{\CC}(\mm)$. The set $X$ is called a basis for $F$, and it serves as a set of (topological) generators \textit{converging to 1}- meaning that every open subgroup of $F_{\CC}(\mm)$ contains all but finite number of elements of $X$. Sets of generators converging to 1 play an important role in profinite groups as every profinite group admits such a set (see \cite[Proposition 2.4.4]{ribes2000profinite}), and its cardinality is an invariant of the group.

It is easy to see that the definition of $F_{\CC}(\mm)$ doesn't depend on the choice of $X$.

 Despite of its name, the free pro-$\CC$ group of rank $\mm$ is not a free object in categorical sense, since $X\cup \{\ast\}$ is not just a set. However, it serves as an initial object in the subcategory of pro-$\CC$ groups of local weight $\mm$. Here, the local weight of a profinite group $G$, denoted by $\omega_0(G)$, stands for the cardinality of the set of its open (normal) subgroups, which is also equal to the cardinality of every set of generators converging to 1. As such, it has a great importance in the studying of profinite groups. Free profinite groups of infinite rank also play an important rule in Galois theory: the free profinite group of rank $\mm$ is the absolute Galois group of the field $k(t)$ where $k$ is some algebraically closed field of cardinality $\mm$ (\cite{douady1964determination}). 
 \begin{comment}
 Free pro-$\CC$ groups are characterized by there ability to solve embedding problems:
\begin{prop}(\cite[Proposition 3.5.9]{ribes2000profinite})
	Let $G$ be a pro-$\CC$ group of local weight $\mm$. Then, $G$ is isomorphic to $F_{\CC}(\mm)$ if an only if, for every embedding problem of pro-$\CC$ groups, i.e, a diagram of the form:
		\[
	\xymatrix@R=14pt{ &  &G \ar@{->>}[d]^{\varphi_i}& \\
	 & H \ar[r]& K \ar[r]&1\\
	}
	\]
	
	such that $H$ and $K$ are pro-$\CC$ groups, $\omega_0(H)\leq \mm$ and $\omega_0(K)<\mm$, is solvable, meaning: there exist an epimorphism $\psi:G\to H$ which makes the diagram
		\[
	\xymatrix@R=14pt{  & &G \ar@{->>}[d]^{\varphi} \ar@{->>}[ld]_{\psi}& \\
		& H \ar[r]& K \ar[r]&1\\
	}
	\]
	commutative.
\end{prop}
The notation embedding problem might look a bit strange since we deal with projections rather then embeddings, however, it comes from Galois theory and embedding problems of field extension, which induce epimorphisms between the Galois groups.
\end{comment}
A special case is devoted to free pro-$p$ groups. These are exactly the pro-$p$ groups for which $cd(G)\leq 1$, and their rank is known to be equal to $\dim_{\mathbb{F}_p}(H^1(G,\mathbb{F}_p))$. In fact, they are the building blocks for \textit{projective} groups: a profinite group is projective if and if all its $p$- Sylow subgroups are free pro-$p$ (See \cite[Proposition 7.6.7, Theorem 7.7.4 ]{ribes2000profinite}). A famous conjecture of  L. Positselski (\cite{positselski2005koszul}) states that the maximal pro-$p$ Galois group of $\sqrt[p^{\infty}]{\mathbb{F}}=\mathbb{F}(\sqrt[p^n]{a},a\in \mathbb{F},n\geq 1)$, where $\mathbb{F}$ is a field containing a primitive $p$th root of unity, is a free pro-$p$ group.   

For more information about free pro-$\CC$ groups one may look at \cite[Chapter 3]{ribes2000profinite}, \cite[Chapter 7]{fried2006field} and the various papers that have been written on free pro-$\CC$ groups, such as \cite{jarden1995free, mel1978normal,  jarden1980normal, lubotzky1981subgroups}.

Contrary to the abstract case, for which we have a pretty good understanding, free pro-$\CC$ groups possess a rich, complex and mysterious structure. For example, while open subgroups of free pro-$\CC$ groups of infinite rank are free pro-$\CC$ of the same rank, this is not true in general for closed subgroups. This fact makes the search for free closed subgroups of free pro-$\CC$ groups an interesting subject of research (see, for example, \cite[Chapter 8]{ribes2000profinite} , and \cite{shusterman2016free}). Moreover, while a countable inverse limit of f.g pro-$\CC$ groups is countably free, a general inverse limit of free pro-$\CC$ groups may not be free (\cite{ribes2000profinite} Example 9.1.14). It is still an open question, whether a general inverse limit of f.g free pro-$\CC$ groups is free, necessarily of infinite rank (\cite{melnikov1980projective}).

 In this paper we would like to study the  profinite completion of a free pro-$\CC$ group of infinite rank, which leads us to questions about the abstract structure of a free pro-$\CC$ group, and the lattice of its finite abstract quotients.
 \begin{defn}

 Let $G$ be an abstract group. The profinite completion of $G$, denoted by $\hat{G}$, is a profinite group equipped with a homomorphism $i:G\to \hat{G}$ which satisfies the following universal property: for every homomorphism $f:G\to H$ where $H$ is a profinite group, there exits a unique continuous homomorphism $\hat{f}:\hat{G}\to H$ which makes the following diagram commutative:
 \[
 \xymatrix@R=14pt{ \hat{G}\ \ar[rd]^{\hat{f}}& \\
 	G\ \ar[u]_{i} \ \ar[r]_f& H\\
 }
 \]
  \end{defn}
In fact, the profinite completion of $G$ is the inverse limit of the inverse system of its finite quotients with the natural epimorphisms between them, when $i:G\to \hat{G}$ is the homomorphism induced from the natural projections $G\to G/H$. One easily sees that $i[G]$ is dense in $\hat{G}$, and that $i$ is one-to-one if and only if $G$ is residually finite. 

Now let $G$ be a profinite group. Considered as an abstract group, $G$ also admits a profinite completion. Despite what the first intuition might be, a profinite group may not be equal to its profinite completion. In fact, by the explicit construction, a profinite group is equal to its profinite completion if and only if every subgroup of finite index is open. The following is an important example to a profinite group with infinitely many dense proper subgroups of finite index:
\begin{exam} (\cite[Example 4.2.12]{ribes2000profinite})\label{main example}
Let $T$ be a nontrivial finite group and look at $G=\prod_{\mm}T$ an infinite power of $T$. For every ultrafilter $\FF$ on $\mm$, let $T_{\FF}$ be the subgroup of $G$ consisting of all tuples $(t_i)$ such that $\{i:t_i=1\}\in \mathcal{F}$. One easily sees that $G/T_{\FF}\cong T$, and if $\FF$ is not principal, then $T_{\FF}$ is dense. 
\end{exam}
A profinite group which is equal to its own profinite completion is called \textit{strongly complete}. In Lemma \ref{basic lemma} it is shown that a free pro-$\CC$ group of infinite rank is never strongly complete. Thus, $\widehat{F_{\mathcal{C}}(\mm)}$ is a new profinite group. In addition to the profinite completion we just define, which we can treat as the "first" profinite completion, a nonstrongly complete profinite group admits a series of profinite completions of higher orders, which we denote by $G_{\alpha}$, and are defined for every ordinal $\alpha$. These completions of higher order were first introduced in \cite{BarOn2018tower} and are built as follows: Let $G$ be a nonstrongly complete profinite group. Denote $G=G_0$. If $\alpha=\beta+1$ then $G_{\alpha}=\widehat{G_{\beta}}$. If $\alpha$ is a limit ordinal, $G_{\alpha}=\widehat{H_{\alpha}}$ where $H_{\alpha}={\dirlim}_{\beta<\alpha}\{G_{\beta}, \varphi_{\gamma\beta}\}$, where $\varphi_{\gamma\beta}$ are compatible homomorphisms which are defined for every $\gamma<\beta$.  In \cite{BarOn2018tower} it is shown that all the groups in this chain are nonstrongly complete, and all the homomorphisms are injections. That makes the chain a strictly increasing chain, which never terminates. In \cite{BarOn+2021} it was shown in addition that for every ordinal $\alpha$, if $\alpha=\beta+1$ then $\omega_0(G_{\alpha})=2^{2^{\omega_0(G_{\beta})}}$, and if $\alpha$ is a limit then $ \omega_0(G_{\alpha})={2^{\sup_{\beta<\alpha}{\omega_0(G_{\beta})}}}$.

This paper is going to deal with the following question:

\begin{question}[Main Question]
Is the profinite completion of any order, of a free pro-$\CC$ group of infinite rank, a free pro-$\CC$ group as well? 
\end{question}
This question is motivated by the abstract case: In Lemma \ref{abstract case} it is shown that the profinite completion of a free abstract group is a free profinite group, and in the pro-$p$ case, which was proven by the author in \cite{bar-on_2021} and \cite{baron-cohomological}

The paper is organized as follows: In Section 1, we prove that the Main Question is equivalent to the ability of a free pro-$\CC$ group to solve abstract embedding problems. In Section 2, we give positive results for some special varieties: the varieties of finite $p$-groups, finite nilpotent groups and finite abelian groups. In Section 3 we give some partial results for the general case, which are valid for every variety $\CC$. In Section 4 we prove some results about the existence of noncontinuous homomorphisms from a free pro-$\CC$ group to itself.
\section*{Abstract embedding problems}
We start the paper by proving that: 
\begin{lem} \label{basic lemma}
$F_{\CC}(\mm)$ is not strongly complete		
\end{lem}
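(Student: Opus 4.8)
The plan is to show that $F_{\CC}(\mm)$ has a proper subgroup of finite index that is not open, which by the criterion stated in the introduction (a profinite group equals its profinite completion iff every finite-index subgroup is open) immediately yields that $F_{\CC}(\mm)$ is not strongly complete. The most natural source of such a subgroup is the kernel of an abstract (not necessarily continuous) homomorphism onto a finite group in $\CC$ that cannot arise as a continuous one. Concretely, I would exhibit an abstract surjection $F_{\CC}(\mm) \to T$ for some nontrivial $T \in \CC$ whose kernel is dense, hence not open.

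First I would fix a basis $X$ of cardinality $\mm$ converging to $1$, and pick any nontrivial $T \in \CC$ (the variety is nontrivial, so such $T$ exists; if $\CC$ contained only the trivial group the statement would be vacuous). The key point is that abstractly, as a set, $X$ generates a dense subgroup but the topological constraint "converging to $1$" is exactly what a continuous homomorphism must respect and an abstract one need not. So I would construct a map $f \colon X \to T$ that sends infinitely many elements of $X$ to a fixed nontrivial $t \in T$ — this map does \emph{not} extend to a continuous homomorphism on $F_{\CC}(\mm)$ via the universal property, because a continuous pointed map $X \cup \{\ast\} \to T$ must send all but finitely many points of $X$ near $\ast \mapsto 1$, i.e.\ eventually to $1$. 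Instead I would extend $f$ abstractly: let $A$ be the abstract (algebraic) subgroup generated by $X$, which is free abstract on $X$ by the universal property restricted appropriately, extend $f$ to a homomorphism $A \to T$, and then argue this descends to an abstract homomorphism on $F_{\CC}(\mm)$, or more cleanly, directly build a finite-index dense subgroup.

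The cleanest route, which I would actually pursue, is to embed Example~\ref{main example} into $F_{\CC}(\mm)$. I would find a continuous surjection $\pi \colon F_{\CC}(\mm) \twoheadrightarrow \prod_{\mm} T$ onto an infinite power of a nontrivial $T \in \CC$ (this power is a pro-$\CC$ group, and such a $\pi$ exists by mapping the basis $X$ appropriately onto a set of generators converging to $1$ of $\prod_{\mm} T$, or by a local-weight/rank comparison using that $F_{\CC}(\mm)$ surjects onto every pro-$\CC$ group of local weight $\leq \mm$). Then for a nonprincipal ultrafilter $\FF$ on $\mm$, the dense proper finite-index subgroup $T_{\FF} \leq \prod_{\mm} T$ of Example~\ref{main example} pulls back along $\pi$ to $\pi^{-1}(T_\FF)$, which is a subgroup of $F_{\CC}(\mm)$ of index $|T| < \infty$; since $\pi$ is continuous and surjective and $T_\FF$ is dense, $\pi^{-1}(T_\FF)$ is dense, hence not open (an open proper subgroup is closed, so cannot be dense). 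Therefore $F_{\CC}(\mm)$ has a non-open finite-index subgroup and is not strongly complete.

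The main obstacle I anticipate is establishing the existence of the continuous surjection $\pi \colon F_{\CC}(\mm) \twoheadrightarrow \prod_{\mm} T$, since one must verify both that $\prod_{\mm} T \in \CC$-closure as a pro-$\CC$ group of local weight exactly $\mm$, and that a suitable generating map from the basis exists realizing the surjection. This requires only that $\omega_0(\prod_\mm T) \leq \mm$ and that $F_{\CC}(\mm)$ maps onto every such group, which follows from the universal property by choosing a convergent-to-$1$ generating set of the target and lifting it through $i$; the verification that $\omega_0(\prod_\mm T)=\mm$ for nontrivial finite $T$ is the one genuinely cardinal-arithmetic step. Once $\pi$ is in hand, the rest is the formal pullback argument above, and the density of $T_\FF$ is exactly what Example~\ref{main example} supplies.
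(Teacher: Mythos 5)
Your proposal is correct and takes essentially the same route as the paper: the paper likewise uses the fact that $F_{\CC}(\mm)$ projects onto every pro-$\CC$ group of local weight $\mm$ (citing \cite[Theorem 3.3.16]{ribes2000profinite}) to obtain a continuous surjection onto $\prod_{\mm}T$ for a nontrivial $T\in\CC$, and then pulls back the dense proper finite-index ultrafilter subgroup of Example \ref{main example}. Your explicit verifications (that the pullback is dense, finite-index, and non-open) merely spell out steps the paper leaves implicit.
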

\begin{proof}
By \cite[Theorem 3.3.16]{ribes2000profinite}, $F_{\CC}(\mm)$ projects on every pro-$\CC$ group of local weight $\mm$. Let $C\in \CC$ be a finite group. Then $F_{\CC}(\mm)$ projects on $\prod_{\mm}C$. By Example \ref{main example} $\prod_{\mm}C$ admits a proper finite index dense subgroup, and so does $F_{\CC}(\mm)$. In fact, there are $2^{2^{\mm}}$ different nonprincipal ultrafilters over a set of cardinality $\mm$ (see \cite[Section 111]{steen1978counterexamples}), each one of them created a different finite index dense proper subgroup, so $F_{\CC}(\mm)$ admits at least $2^{2^{\mm}}$ subgroups of finite index which are not open.
\end{proof}
One of the useful ways to characterize free pro-$\CC$ groups, is due to \textit{finite} embedding problems:
\begin{prop}(\cite[Proposition 3.5.11]{ribes2000profinite})
Let $G$ be a pro-$\CC$ group with $\omega_0(G)=\mm$. Then $G$ is a free pro-$\CC$ group of rank $\mm$ if and only if every embedding problem of the form:
	\[
\xymatrix@R=14pt{ & & &G \ar@{->>}[d]^{\varphi}& \\
	1 \ar[r] & K\ar[r] & C \ar[r]^{\alpha}&D \ar[r]&1\\
}
\]
where $K\ne 1$ and  $C$ is a finite group belonging to $\CC$, has $\mm$ different solutions.
\end{prop}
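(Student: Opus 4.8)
The plan is to prove the two implications separately, using the universal property of $F_{\CC}(\mm)$ for the forward direction and a transfinite tower of \emph{finite} embedding problems for the converse, reducing the latter to the standard characterisation of freeness by embedding problems \cite[Proposition 3.5.9]{ribes2000profinite}.

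First I would handle the direction asserting that a free group solves each such problem exactly $\mm$ times. Write $G=F_{\CC}(\mm)$ with basis $X$ converging to $1$, $|X|=\mm$, and let the data be $\varphi:G\twoheadrightarrow D$ and $\alpha:C\twoheadrightarrow D$ with $K=\ker\alpha\neq1$. Since $\ker\varphi$ is open it contains all but finitely many elements of $X$, so $X_0=\{x\in X:\varphi(x)=1\}$ is cofinite of cardinality $\mm$ and $X_1=X\setminus X_0$ is finite. Define a pointed map $f\colon X\cup\{\ast\}\to C$ by choosing, for $x\in X_1$, any $c_x\in\alpha^{-1}(\varphi(x))$; assigning the finitely many generators of $K$ to finitely many chosen elements of $X_0$; and sending every other point to $1$. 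Only finitely many values are nontrivial, so $f$ converges to $1$ and extends to a continuous homomorphism $\psi_0:G\to C$ with $\alpha\psi_0=\varphi$; as the images generate a closed subgroup that contains $K$ and surjects onto $D=C/K$, $\psi_0$ is onto, i.e.\ a solution. To manufacture $\mm$ distinct solutions, fix $1\neq k\in K$ and, for each of the $\mm$ generators $x\in X_0$ sent to $1$ and not used for surjectivity, let $\psi_x$ agree with $\psi_0$ except that $\psi_x(x)=k$. Each $\psi_x$ is again a solution (the altered value lies in $K=\ker\alpha$ and $\varphi(x)=1$), and since $\psi_x(x)=k\neq1=\psi_{x'}(x)$ for $x'\neq x$ they are pairwise distinct, giving $\mm$ solutions. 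A counting argument closes the gap in the other direction: a continuous homomorphism $G\to C$ is determined by a finitely supported function $X\to C$, of which there are only $\mm$, so the number of solutions is exactly $\mm$.

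For the converse I would show that the finite-embedding-problem hypothesis forces $G$ to solve every embedding problem $G\xrightarrow{\varphi}A$, $H\xrightarrow{\alpha}A$ with $\omega_0(H)\le\mm$ and $\omega_0(A)<\mm$. Set $R=\ker\alpha$, so $H/R\cong A$, and keep $\varphi\colon G\to A$ as the fixed bottom map. Using that $\omega_0(H)\le\mm$ and that a finite intersection of open normal subgroups is again open normal with quotient in $\CC$, I would build a descending chain of open normal subgroups $R=W_0\supseteq W_1\supseteq\cdots\supseteq W_\beta\supseteq\cdots$ of length $\le\mm$ with each $H/W_\beta\in\CC$ and $\bigcap_\beta W_\beta=1$, and construct along it compatible epimorphisms $\varphi_\beta:G\twoheadrightarrow H/W_\beta$ lifting $\varphi$. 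At a successor $\beta+1$, lifting $\varphi_\beta$ through $H/W_{\beta+1}\twoheadrightarrow H/W_\beta$ is a finite embedding problem with kernel $W_\beta/W_{\beta+1}$: trivial kernel gives a unique lift, and a nontrivial one is lifted by the hypothesis. At a limit $\lambda$ the $\varphi_\beta$ assemble into a map $G\to\invlim_{\beta<\lambda}H/W_\beta=H/\bigcap_{\beta<\lambda}W_\beta$ whose image is closed and dense, hence everything, so the recursion continues. Passing to the top, $\bigcap_\beta W_\beta=1$ yields an epimorphism $G\twoheadrightarrow H$ lifting $\varphi$. Compatibility with $\varphi$ is automatic throughout, since $A$ is a common further quotient of every $H/W_\beta$. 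Having solved every embedding problem in this weight range, I would invoke \cite[Proposition 3.5.9]{ribes2000profinite} to conclude $G\cong F_{\CC}(\mm)$.

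The hard part is the limit stages and the bookkeeping that makes the transfinite recursion legitimate: one must check that the chain $\{W_\beta\}$ can be taken descending with trivial intersection and all quotients in $\CC$, and that surjectivity survives the passage to inverse limits, which is where compactness of $G$ is used. I note that for the construction itself a single solution per successor step suffices; the full strength of the hypothesis, $\mm$ solutions rather than one, is exactly what the forward direction produces and is the sharp, self-dual form of the criterion, matching the local weight $\omega_0(G)=\mm$ and thereby pinning the rank of $G$ to $\mm$ rather than something smaller.
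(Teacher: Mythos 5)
Your forward direction is correct and essentially the standard argument: $\ker\varphi$ is open, so a basis $X$ converging to $1$ has cofinitely many elements in it; a finitely supported lift of $\varphi$ on $X$ that also hits generators of $K$ gives one continuous solution, perturbing it at each of $\mm$ unused basis elements by a fixed $1\ne k\in K$ gives $\mm$ distinct solutions, and the count of finitely supported maps $X\to C$ caps the total at $\mm$. (Note the paper itself offers no proof of this proposition; it is quoted from \cite[Proposition 3.5.11]{ribes2000profinite}, so the comparison is with that source.)

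The converse, however, has a genuine gap, and it sits exactly where the ``$\mm$ solutions'' hypothesis earns its keep. First, $R=\ker\alpha$ need not be open: $\omega_0(A)<\mm$ permits $A$ infinite whenever $\mm$ is uncountable, and any subgroup contained in $R$ then has infinite index, so your chain cannot even start at $W_0=R$ with $W_0$ open. More fundamentally, a strictly descending well-ordered chain of \emph{open} subgroups of $H$ must have order type at most $\omega$: the indices strictly increase, so after $\omega$ steps the intersection $\bigcap_{n<\omega}W_n$ has infinite index and contains no open subgroup, and the chain cannot be continued. Hence a chain of open normal subgroups with trivial intersection exists only when $\omega_0(H)\le\aleph_0$; for uncountable $\omega_0(H)$ the recursion is forced through closed, non-open $U_\lambda$ with $H/U_\lambda$ infinite, and the successor lifting problems $H/U_{\lambda+1}\twoheadrightarrow H/U_\lambda$ then have finite kernel but \emph{infinite} top and middle groups, which the finite-embedding-problem hypothesis does not cover. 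This is also why your closing remark --- that one solution per step suffices and the $\mm$-solutions clause is mere sharpness --- cannot be right: with single solvability the statement is Iwasawa's criterion, valid only for $\mm=\aleph_0$. The standard repair (and the substance of the proof in \cite{ribes2000profinite}) is to handle a finite-kernel problem with infinite middle group $M$ and quotient $B=M/K$ by choosing $W\unlhd_o M$ with $W\cap K=1$, writing $M$ as the fibered product of $M/W$ and $B$ over $M/WK$, and solving the associated \emph{finite} problem $G\twoheadrightarrow M/WK$ through $M/W$; but the pair map $G\to M$ built from a solution $\psi$ and the given $\varphi$ need not be onto --- a closed subgroup of a fibered product can surject onto both factors without being everything, as the diagonal in $C\times C$ shows --- and it is precisely the supply of $\mm$ distinct solutions $\psi$ that lets one choose a solution making the pair map surjective at each of the up to $\mm$ stages. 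Your limit-stage compactness argument, by contrast, is fine as stated.
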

In fact, we can refine this condition a bit:
\begin{prop}\label{minimal normal}
	Let $G$ be a pro-$\CC$ group with $\omega_0(G)=\mm$. Then $G$ is a free pro-$\CC$ group of rank $\mm$ if and only if every embedding problem of the form:
	\[
	\xymatrix@R=14pt{ & & &G \ar@{->>}[d]^{\varphi}& \\
		1 \ar[r] & K\ar[r] & C \ar[r]^{\alpha}&D \ar[r]&1\\
	}
	\]
	where $K\ne 1$ is minimal normal non trivial subgroup of $C$, and  $C$ is a finite group belonging to $\CC$, has $\mm$ different solutions.
\end{prop}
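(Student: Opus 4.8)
The forward implication is immediate: if $G$ is free pro-$\CC$ of rank $\mm$, then by the preceding proposition (\cite[Proposition 3.5.11]{ribes2000profinite}) every embedding problem with nontrivial kernel $K$ already has $\mm$ solutions, in particular those in which $K$ is minimal normal in $C$. So the content is the converse, and the plan is to reduce an arbitrary finite embedding problem to a finite tower of embedding problems each having a minimal normal kernel, and to solve the original one by climbing the tower.

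Assume the minimal-normal condition, and consider an embedding problem
\[
1 \to K \to C \xrightarrow{\alpha} D \to 1, \qquad \varphi\colon G \twoheadrightarrow D,
\]
with $K \ne 1$ and $C \in \CC$. First I would choose a chief series of $C$ refining $1 \trianglelefteq K \trianglelefteq C$, that is, a chain $1 = K_0 \trianglelefteq K_1 \trianglelefteq \cdots \trianglelefteq K_n = K$ of normal subgroups of $C$ in which each factor $K_{i+1}/K_i$ is a minimal normal nontrivial subgroup of $C/K_i$; such a refinement exists because $K$ is a normal subgroup of the finite group $C$. Passing to quotients turns this into a tower of epimorphisms
\[
C = C/K_0 \twoheadrightarrow C/K_1 \twoheadrightarrow \cdots \twoheadrightarrow C/K_n = D,
\]
where the $i$-th step is the extension $1 \to K_{i+1}/K_i \to C/K_i \to C/K_{i+1} \to 1$ whose kernel is minimal normal in $C/K_i$, itself a quotient of $C$ and hence a member of $\CC$ since $\CC$ is closed under quotients.

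Now I would lift $\varphi$ up the tower one step at a time. Setting $\psi_n = \varphi\colon G \to C/K_n$, and given $\psi_{i+1}\colon G \to C/K_{i+1}$, the diagram formed by $\psi_{i+1}$ and the $i$-th extension is an embedding problem of exactly the type covered by the hypothesis, so it admits $\mm$ solutions $\psi_i\colon G \to C/K_i$ lifting $\psi_{i+1}$; iterating down to $i=0$ produces an epimorphism $\psi_0\colon G \to C$ solving the original problem. To obtain $\mm$ distinct solutions rather than merely one, I would exploit the freedom already present at the top step: the hypothesis furnishes $\mm$ distinct lifts $\psi_{n-1}\colon G \to C/K_{n-1}$ of $\varphi$, and each can be completed to some $\psi_0\colon G \to C$ as above. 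Because the projection $C \to C/K_{n-1}$ carries $\psi_0$ back to $\psi_{n-1}$, distinct choices of $\psi_{n-1}$ force distinct $\psi_0$; this yields at least $\mm$ solutions, and since a continuous map from $G$ into the finite group $C$ is determined by an open normal subgroup (of which there are $\mm$) together with finitely much extra data, there are at most $\mm$, hence exactly $\mm$. By the preceding proposition this proves $G$ is free pro-$\CC$ of rank $\mm$.

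The step I expect to require the most care is the bookkeeping that guarantees $\mm$ genuinely distinct final solutions: one must ensure that minimality is read inside the successive quotients $C/K_i$ — which is precisely what a chief series delivers — rather than inside $C$ itself, and that the abundance of lifts at the top of the tower survives the further lifting. The existence of the chief refinement and the memberships $C/K_i \in \CC$ are routine.
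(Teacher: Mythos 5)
Your proposal is correct and takes essentially the same approach as the paper: the paper's proof is an induction on the length $l_C(K)$ of a normal series of $C$ contained in $K$, which is precisely your chief-series tower unrolled, with each minimal-normal step solved by the hypothesis. The only difference is cosmetic, in the multiplicity count — the paper propagates $\mm$ solutions at every stage and multiplies ($\mm\cdot\mm=\mm$), whereas you take $\mm$ distinct lifts at the first stage and combine this with the upper bound of $\mm$ continuous homomorphisms from $G$ into a finite group; both counts are sound.
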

	\begin{proof}
	The first direction is trivial. For the second direction, we use induction on the length of a series of normal subgroups of $C$ that are contained in $K$. Denote it by $l_C(K)$. If $l_C(K)=1$ then by assumption, we are done. Assume that $l_C(K)=n>1$ and that the claim holds for all $m<n$. Since $l_C(K)>1$, there is a subgroup $1\ne A\unlhd C$ s.t $A\subset K$. We can look at the following embedding problem
	\[
	\xymatrix@R=14pt{ & & &G \ar@{->>}[d]^{\varphi}& \\
		1 \ar[r] & K/A\ar[r] & C/A \ar[r]&D \ar[r]&1\\
	}
	\]
	$l_{C/A}(K/A)<l_C(A)$, hence by induction assumption. there are $\mathbf{m}$ different solutions $\psi_i:G\to C/A$. Now, for each solution, we can build a new embedding problem
	\[
	\xymatrix@R=14pt{ & & &G \ar@{->>}[d]^{\psi_i}& \\
		1 \ar[r] & A\ar[r] & C \ar[r]&C/A \ar[r]&1\\
	}
	\]
	$l_C(A)<l_C(K)$, so again by induction assumption, it has $\mathbf{m}$ different solutions. Every such solution is a solution for the original embedding problem, so there are $\mathbf{m}\cdot \mathbf{m}=\mathbf{m}$ solutions.
	
\end{proof}
This refinement will be very useful in the proof of Theorem \ref{MainProp}.

Now we will present a way to determine whether a profinite completion of a given abstract group is free pro-$\CC$. 
	\begin{prop}\label{free_completion_of_abstract_group}
	Let $G$ be an abstract group, such that $\omega_0(\widehat{G})=\mm$. Then, $\widehat{G}$ is a free pro-$\CC$ group of rank $\mathbf{m}$ if and only if every \textit{abstract} embedding problem
	\[
	\xymatrix@R=14pt{ & & &G \ar@{->>}[d]^{\varphi}& \\
		1 \ar[r] & K\ar[r] & C \ar[r]^{\alpha}&D \ar[r]&1\\
	}
	\]
	with $K\ne 1$ minimal normal non trivial and $C$  a finite group belonging to $\CC$, has $\mathbf{m}$ different \textit{abstract} solutions.
\end{prop}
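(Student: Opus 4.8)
The plan is to reduce the statement to Proposition \ref{minimal normal} applied to the profinite group $\widehat{G}$, by setting up a dictionary between \emph{abstract} homomorphisms out of $G$ into finite groups and \emph{continuous} homomorphisms out of $\widehat{G}$ into the same finite groups. Since $\omega_0(\widehat{G})=\mm$ by hypothesis, Proposition \ref{minimal normal} tells us that $\widehat{G}$ is a free pro-$\CC$ group of rank $\mm$ if and only if every \emph{continuous} embedding problem with $C\in\CC$ and $K$ a nontrivial minimal normal subgroup of $C$ admits $\mm$ continuous solutions. So it suffices to show that such continuous embedding problems, together with their solution sets, are in a cardinality-preserving bijection with the corresponding abstract embedding problems and their abstract solution sets.

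First I would record the basic dictionary. Let $D$ be any finite group. By the universal property of the profinite completion, every abstract homomorphism $\varphi\colon G\to D$ extends uniquely to a continuous homomorphism $\hat\varphi\colon\widehat{G}\to D$ with $\hat\varphi\circ i=\varphi$, and conversely every continuous homomorphism $\psi\colon\widehat{G}\to D$ restricts to the abstract homomorphism $\psi\circ i\colon G\to D$. Uniqueness in the universal property makes these two assignments mutually inverse, giving a bijection between $\mathrm{Hom}(G,D)$ and $\mathrm{Hom}_{\mathrm{cont}}(\widehat{G},D)$.

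The one point that requires care, and the main technical obstacle, is that this bijection must preserve surjectivity, since embedding problems and their solutions are phrased in terms of epimorphisms. In the direction $\widehat{G}\to D$, if $\psi$ is a continuous epimorphism then $\psi(i(G))$ is the image of the dense subset $i(G)$ under the continuous map $\psi$, hence dense in $\psi(\widehat{G})=D$; as $D$ is finite and discrete, density forces $\psi(i(G))=D$, so $\psi\circ i$ is already surjective. In the direction $G\to D$, if $\varphi$ is surjective then the image of $\hat\varphi$ is closed, being the continuous image of a compact group, and contains $\hat\varphi(i(G))=\varphi(G)=D$, so $\hat\varphi$ is surjective as well. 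Thus the bijection of the previous paragraph restricts to a bijection between abstract and continuous epimorphisms onto $D$.

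Finally I would run this dictionary over an entire embedding problem. Fix a finite $C\in\CC$ and an epimorphism $\alpha\colon C\twoheadrightarrow D$ with $K=\ker\alpha$ nontrivial and minimal normal. The starting data, an abstract epimorphism $\varphi\colon G\twoheadrightarrow D$ on one side and a continuous epimorphism $\hat\varphi\colon\widehat{G}\twoheadrightarrow D$ on the other, correspond under the bijection just established; and for corresponding starting data the solution sets correspond too. Indeed, if $\psi\colon G\twoheadrightarrow C$ is an abstract solution, then $\hat\psi$ is a continuous epimorphism and $\alpha\circ\hat\psi$ is a continuous extension of $\alpha\circ\psi=\varphi$, hence equals $\hat\varphi$ by uniqueness, so $\hat\psi$ solves the continuous problem; conversely a continuous solution $\hat\psi$ restricts to an abstract solution $\hat\psi\circ i$, and the two operations are mutually inverse. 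Therefore the number of abstract solutions equals the number of continuous solutions, and combining this equality with Proposition \ref{minimal normal} yields exactly the claimed equivalence.
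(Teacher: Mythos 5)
Your proposal is correct and follows essentially the same route as the paper's own proof: both translate abstract embedding problems for $G$ into continuous ones for $\widehat{G}$ via the universal property of the completion, use density of $i(G)$ together with finiteness of the target groups to preserve surjectivity and to match solution sets (with distinctness/uniqueness coming from the dense-image argument), and both rest on Proposition \ref{minimal normal} to restrict attention to minimal normal kernels. Packaging the two directions as a single cardinality-preserving bijection rather than as two separate implications is only a cosmetic difference.
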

\begin{proof}
	For the first direction, Assume that $\widehat{G}$ is free pro-$\CC$. Let
	\[
	\xymatrix@R=14pt{ & & &G \ar@{->>}[d]^{\varphi}& \\
		1 \ar[r] & K\ar[r] & C \ar[r]^{\alpha}&D \ar[r]&1\\
	}
	\]
	be an abstract embedding problem as in the proposition. By definition of the profinite completion, $\varphi$ can be lifted to a continuous epimorphism $\hat{\varphi}:\widehat{G}\to D$, s.t $\hat{\varphi}=\varphi\circ i$, when $i:G\to \widehat{G}$ is the natural homomorphism. Look at the new embedding problem
	\[
	\xymatrix@R=14pt{ & & &\widehat{G} \ar[d]^{\hat
			\varphi}& \\
		1 \ar[r] & K\ar[r] & C \ar[r]^{\alpha}&D \ar[r]&1\\
	}
	\]
	Since $\widehat{G}$ is free pro-$\CC$ group of rank $\mm$, there are $\mathbf{m}$ different continuous epimorphisms $\psi_i:\widehat{G}\to C$ such that $\hat{\varphi}=\alpha\circ \psi_i$. Now look at the maps of the form $\psi_i\circ i:G\to C$. Since $i(G)$ is dense in $\widehat{G}$ and $\psi_i$ is continuous for all $i$, we get that $(\psi_i\circ i)(G)$ is dense in $C$. But $C$ is finite, and thus discrete, so $(\psi_i\circ i)(G)=C$. I.e, $\psi_i\circ i$ is onto. In addition, for all $i$, $\varphi=\hat{\varphi}\circ i=\alpha\circ\psi_i\circ i$.
	Finally, for all $i\ne j$ we have $\psi_i\circ i\ne \psi_j\circ i$, since $i(G)$ is dense in $\widehat{G}$, and hence if two continuous functions to an Hausdorff identify on it, then they are equal.
	
	For the second direction, look at the following diagram:
	\[
	\xymatrix@R=14pt{ & & &\widehat{G} \ar@{->>}[d]^{\varphi}& \\
		1 \ar[r] & K\ar[r] & C \ar[r]^{\alpha}&D \ar[r]&1\\
	}
	\]
	where $\varphi$ is continuous, and $C\in \CC$ is finite. In order to prove that $\widehat{G}$ is free of rank $\mathbf{m}$, we need to find $\mathbf{m}$ different solutions. Let us look at the following diagram
	\[
	\xymatrix@R=14pt{ & & &G \ar[d]^{\varphi\circ i}& \\
		1 \ar[r] & K\ar[r] & C \ar[r]^{\alpha}&D \ar[r]&1\\
	}
	\]
	$\varphi\circ i$ is onto, by the same argument as above. By assumption, $G$ admits $\mathbf{m}$ different epimorphisms $\psi_i:G\to C$ s.t $\alpha\circ \psi_i=\varphi\circ i$. For every such solution, define its lifting $\hat{\psi_i}:\widehat{G}\to C$, which is a continuous epimorphism. All these liftings are different, and they satisfy $\alpha\circ \hat{\psi}=\varphi$ since these are two continuous functions to an Hausdorff space, identifying on the dense subset $i(G)$.
\end{proof}
Following this proposition, we conclude that:
\begin{prop}
	Let $F(\mathbf{m})$ be a free pro-$\CC$ group of rank $\mathbf{m}$. Then, $\widehat{F_{\CC}(\mathbf{m})}$, the profinite completion of $F_{\CC}(\mathbf{m})$, is a free pro-$\CC$ group if and only if every \textit{abstract} embedding problem
	\[
	\xymatrix@R=14pt{ & & &G \ar@{->>}[d]^{\varphi}& \\
		1 \ar[r] & K\ar[r] & C \ar[r]^{\alpha}&D \ar[r]&1\\
	}
	\]
	with $K$ non trivial minimal normal subgroup, $C\in \CC$ finite, and $\varphi$ is not necessarily continuous, has $2^{2^{\mathbf{m}}}$ different \textit{abstract} solutions.
\end{prop}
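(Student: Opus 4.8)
The plan is to derive this statement as a direct specialization of Proposition \ref{free_completion_of_abstract_group}, applied to the abstract group underlying $F_{\CC}(\mm)$. That proposition already characterizes when $\widehat{G}$ is free pro-$\CC$ purely in terms of abstract embedding problems, except that the number of required solutions there equals $\omega_0(\widehat{G})$ rather than the ambient rank $\mm$. Hence essentially all the work is to pin down this cardinal in our situation and to reconcile it with the notion of rank.

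First I would compute $\omega_0(\widehat{F_{\CC}(\mm)})$. Viewing $F_{\CC}(\mm)$ as the bottom group $G_0$ of the tower of profinite completions recalled in the introduction, its local weight is $\omega_0(F_{\CC}(\mm))=\mm$, since for a free pro-$\CC$ group the local weight coincides with the rank. The first stage of the tower is exactly $G_1=\widehat{G_0}=\widehat{F_{\CC}(\mm)}$, so the successor-stage formula imported from \cite{BarOn+2021} yields $\omega_0(\widehat{F_{\CC}(\mm)})=2^{2^{\omega_0(F_{\CC}(\mm))}}=2^{2^{\mm}}$. Lemma \ref{basic lemma} guarantees that $F_{\CC}(\mm)$ is not strongly complete, so that $\widehat{F_{\CC}(\mm)}$ is genuinely a new profinite group and the tower machinery of \cite{BarOn+2021} indeed applies.

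Next I would invoke Proposition \ref{free_completion_of_abstract_group} with $G=F_{\CC}(\mm)$, the cardinal there being taken to be $2^{2^{\mm}}$ in accordance with the computation above. It asserts that $\widehat{F_{\CC}(\mm)}$ is a free pro-$\CC$ group of rank $2^{2^{\mm}}$ if and only if every abstract embedding problem of the prescribed form (with $K$ nontrivial minimal normal, $C\in\CC$ finite, and $\varphi$ not necessarily continuous) admits $2^{2^{\mm}}$ distinct abstract solutions. To conclude, I would note that the qualifier ``of rank $2^{2^{\mm}}$'' is automatic: whenever $\widehat{F_{\CC}(\mm)}$ is free pro-$\CC$ at all, its rank must equal its local weight, which we have just shown to be $2^{2^{\mm}}$. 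Thus the bare assertion ``$\widehat{F_{\CC}(\mm)}$ is a free pro-$\CC$ group'' and the refined assertion ``$\widehat{F_{\CC}(\mm)}$ is a free pro-$\CC$ group of rank $2^{2^{\mm}}$'' are interchangeable, and the claimed biconditional is precisely the one supplied by the cited proposition.

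Since every ingredient is already in hand, I do not expect a genuine obstacle: the statement is a corollary. The one point demanding care is the bookkeeping of cardinals, namely ensuring that the cardinal playing the role of $\mm$ in Proposition \ref{free_completion_of_abstract_group} is the local weight $2^{2^{\mm}}$ of the completion and not the original rank $\mm$ of $F_{\CC}(\mm)$. All the substantive content lives upstream, in the local-weight computation taken from \cite{BarOn+2021} and in the abstract-embedding-problem characterization of Proposition \ref{free_completion_of_abstract_group}.
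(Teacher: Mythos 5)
Your proposal is correct, and it follows the same skeleton as the paper's proof: both reduce the statement to Proposition \ref{free_completion_of_abstract_group} applied to $G=F_{\CC}(\mm)$, so that everything hinges on computing $\omega_0(\widehat{F_{\CC}(\mm)})=2^{2^{\mm}}$, together with the observation (which you make explicitly, and the paper makes via \cite[Proposition 2.6.2]{ribes2000profinite}) that if the completion is free at all, its rank is forced to equal this local weight. The one genuine divergence is how that cardinal is established. You import the successor-stage formula $\omega_0(G_{\beta+1})=2^{2^{\omega_0(G_{\beta})}}$ from \cite{BarOn+2021}, after using Lemma \ref{basic lemma} to guarantee that $F_{\CC}(\mm)$ is not strongly complete so the tower machinery applies; this is legitimate, since the paper quotes exactly this formula in its introduction, but it invokes a general theorem about arbitrary nonstrongly complete profinite groups. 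The paper instead verifies the equality by hand: the upper bound comes from the crude count $|F_{\CC}(\mm)|\leq 2^{\mm}$ (the group embeds in a product of $\mm$ finite groups, so it has at most $2^{2^{\mm}}$ finite-index normal subgroups), and the lower bound comes from the proof of Lemma \ref{basic lemma}, which already exhibits $2^{2^{\mm}}$ distinct dense finite-index subgroups of $F_{\CC}(\mm)$ via nonprincipal ultrafilters on a quotient $\prod_{\mm}C$. So the paper's computation is elementary and self-contained, exploiting the specific structure of the free group (its surjection onto $\prod_{\mm}C$), whereas your route trades that for a citation to a deeper external result; yours buys brevity, the paper's buys independence from \cite{BarOn+2021}. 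Both arguments are complete and yield the same biconditional.
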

\begin{proof}
	By Proposition \ref{free_completion_of_abstract_group} it is enough to prove that $\omega_0(\widehat{F_{\CC}(\mathbf{m})})=2^{2^{\mm}}$. By definition, the rank equals the cardinality of a set of generators converging to 1. By \cite[Proposition 2.6.2]{ribes2000profinite}, this is equal to $\omega_0(\widehat{F_{\CC}(\mathbf{m})})$, the number of normal open subgroups. But by \cite[Proposition 3.2.2]{ribes2000profinite} it equals to the number of normal subgroups of finite index in $F(\mathbf{m})$. or the same reasons, $\omega_0(F_{\CC}(\mathbf{m}))=\mathbf{m}$. Thus, $F(\mathbf{m})$ is a subgroup of the multiple of $\mathbf{m}$ finite groups, so $|F(\mathbf{m})|\leq 2^{\mathbf{m}}$, so the number of finite abstract quotients is less or equal to $2^{2^{\mathbf{m}}}$. On the hand, by the proof of Lemma \ref{basic lemma} $F_{\CC}(\mm)$ admits at least $2^{2^{\mathbf{m}}}$ finite quotients. So, $rank(\widehat{F(\mathbf{m})})=\omega_0(\widehat{F(\mathbf{m})})=2^{2^{\mathbf{m}}}.$
\end{proof}
At the end of this section, we will prove the abstract case:
\begin{lem}\label{abstract case}
	Let $F_{\operatorname{abs}}(\mm)$ denotes the free abstract group on $\mm$ generators. Then, $\widehat{F_{\operatorname{abs}}(\mm)}\cong F(2^{\mm})$.
\end{lem}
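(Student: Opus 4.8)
The plan is to apply Proposition~\ref{free_completion_of_abstract_group} with $\CC$ taken to be the variety of \emph{all} finite groups, so that ``free pro-$\CC$'' means free profinite and $F(2^{\mm})$ is the free profinite group of rank $2^{\mm}$. By that proposition, it suffices to verify two things about $G=F_{\operatorname{abs}}(\mm)$: first that $\omega_0(\widehat{G})=2^{\mm}$, and second that every abstract embedding problem with kernel $K\ne 1$ minimal normal in a finite group $C$ admits exactly $2^{\mm}$ abstract (epimorphism) solutions. The local weight computation mirrors the count already carried out in the excerpt: $\omega_0(\widehat G)$ equals the number of finite-index normal subgroups of $F_{\operatorname{abs}}(\mm)$. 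For the upper bound, a homomorphism to a finite group $C$ is determined by the images of the $\mm$ generators, giving at most $|C|^{\mm}=2^{\mm}$ such maps; summing over the countably many isomorphism types of finite groups still gives $2^{\mm}$. For the matching lower bound I would use the $2^{\mm}$ distinct surjections onto $\mathbb{Z}/2\mathbb{Z}$ (one generator mapping freely to $0$ or $1$), whose kernels are pairwise distinct since $\mathbb{Z}/2\mathbb{Z}$ has trivial automorphism group; hence $\omega_0(\widehat G)=2^{\mm}$.

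The heart of the argument is the embedding problem. Fix an abstract epimorphism $\varphi\colon F_{\operatorname{abs}}(\mm)\to D$ together with $\alpha\colon C\to D$ having kernel $K$, and a free basis $\{x_i\}_{i\in I}$ with $|I|=\mm$. Since $F_{\operatorname{abs}}(\mm)$ is free, any assignment $x_i\mapsto c_i$ with $\alpha(c_i)=\varphi(x_i)$ extends to a homomorphism $\psi$ with $\alpha\circ\psi=\varphi$, so the content is to produce a \emph{surjective} lift and then to count. For surjectivity, note that for any such $\psi$ one has $\alpha(\psi(F))=\langle\varphi(x_i)\rangle=D$, so it remains only to force $K\subseteq\psi(F)$, which then gives $\psi(F)=C$. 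Here I would exploit the infinitude of the basis: as $D$ is finite, by pigeonhole some value $d\in D$ is attained by infinitely many generators, say $\varphi(x_i)=d$ for all $i$ in an infinite set $J$. Writing $K=\langle k_1,\dots,k_m\rangle$ (finite), I select $2m$ indices in $J$ and assign their generators in pairs so that the two images of each pair differ (on one side) by a fixed lift $g$ of $d$ and (on the other) by $k_t^{-1}g$; the product $\psi(x_{a_t})\psi(x_{b_t})^{-1}$ then equals $k_t$, placing every $k_t$, and hence all of $K$, inside $\psi(F)$. This yields a surjective lift $\psi_0$.

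To count solutions I would fix $\psi$ to agree with $\psi_0$ on the finitely many generators used above (these already guarantee $K\subseteq\psi(F)$ and $\alpha(\psi(F))=D$, hence surjectivity), and let $\psi$ range freely over the remaining $\mm$ generators, each assignable to any of the $|K|\ge 2$ elements of its prescribed $K$-coset. Distinct choices give distinct homomorphisms, producing $|K|^{\mm}=2^{\mm}$ pairwise distinct surjective solutions; since the total number of lifts is at most $|K|^{\mm}=2^{\mm}$, the count is exactly $2^{\mm}$. Combining this with the local weight gives the hypotheses of Proposition~\ref{free_completion_of_abstract_group}, whence $\widehat{F_{\operatorname{abs}}(\mm)}\cong F(2^{\mm})$. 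I expect the main obstacle to be the existence of a \emph{surjective} lift, handled above by the pigeonhole-on-generators device; I note that the minimal-normality of $K$ is not actually needed for existence, but it legitimately narrows the family of embedding problems we must inspect via Proposition~\ref{free_completion_of_abstract_group}.
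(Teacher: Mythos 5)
Your proposal is correct and follows essentially the same route as the paper: both reduce to Proposition~\ref{free_completion_of_abstract_group}, compute $\omega_0(\widehat{F_{\operatorname{abs}}(\mm)})=2^{\mm}$ by counting maps from a free basis into the countably many finite groups, and solve each abstract embedding problem by lifting the basis inside the fibres $\alpha^{-1}(\varphi(x_i))$, exploiting the fact that $\mm$ (in particular infinitely many) generators share a common image $d\in D$ to force the lift's image to contain $K$, and then varying the remaining generators within their $K$-cosets to get $2^{\mm}$ solutions. Your only deviation --- placing a finite generating set of $K$ in the image via finitely many paired generators, rather than covering the entire coset $\alpha^{-1}(d)$ as the paper does --- is a cosmetic variant of the same surjectivity device.
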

\begin{proof}
	First notice that $\omega_0(\widehat{F_{\operatorname{abs}}(\mm)})$ equals the number of open subgroups of $\omega_0(\widehat{F_{\operatorname{abs}}(\mm)})$ which in turn equals the number of finite index subgroup of $ F_{\operatorname{abs}}(\mm)$. Every finite index subgroup of $F_{\operatorname{abs}}(\mm)$ corresponds to some projection $F_{\operatorname{abs}}(\mm)\to A$ for a finite group $A$, and every such projection corresponds to some function $X\to A$ where $X$ is a basis for $F_{\operatorname{abs}}(\mm)$. There are $2^{\mm}$ different functions $X\to A$ and since there are countably many finite groups, there are $\aleph_0\cdot2^{\mm}=2^{\mm}$ finite index subgroups.
	Let 
		\[
	\xymatrix@R=14pt{ & & &F_{\operatorname{abs}}(\mm)\ar@{->>}[d]^{\varphi}& \\
		1 \ar[r] & K\ar[r] & C \ar[r]^{\alpha}&D \ar[r]&1\\
	}
	\]
	be a finite abstract embedding problem for $F_{\operatorname{abs}}(\mm)$. Let $X$ be some basis for $F_{\operatorname{abs}}(\mm)$. Define a function $\psi:X\to C$ such that for every $x\in X,\psi(x)\in \alpha^{-1}(\varphi(x))$. Since $|X|=\mm$ and $D$ is finite, there is some $d\in D$ such that $|\varphi^{-1}(d)|=\mm$. Use $\varphi^{-1}(d)$ to cover all $\alpha^{-1}(d)$. That can be done in $2^{\mm}$ different ways. Every such function can be extended to an epimorphism $\bar{\psi}:F_{\operatorname{abs}}(\mm)\to C$ which makes the diagram
		\[
	\xymatrix@R=14pt{ & & &F_{\operatorname{abs}}(\mm)\ar@{->>}[d]^{\varphi} \ar@{->>}[ld]_{\psi}& \\
		1 \ar[r] & K\ar[r] & C \ar[r]^{\alpha}&D \ar[r]&1\\
	}
	\]
	commutative.
\end{proof}
	\section*{Special varieties}
	\subsection*{The pro-$p$ case}
	In this subsection we deal with the case of $\CC$ being the variety of all finite $p$-groups. Free pro-$p$ groups have a special characterization:
	\begin{defn}
		Let $G$ be a pro-$\CC$ group. We say that $G$ is \textit{projective} in the category of pro-$\CC$ groups, if every embedding problem of the form 
			\[
		\xymatrix@R=14pt{  &G\ar@{->>}[d]^{\varphi} & \\
			 C \ar[r]^{\alpha}&D \ar[r]&1\\
		}
		\]
		where $C,D$ are pro-$\CC$ groups, is weakly solvable. I.e, there is a homomorphism $\psi:G\to C$ which makes the diagram 
		\[
		\xymatrix@R=14pt{  &G\ar@{->>}[d]^{\varphi} \ar[ld]_{\psi} & \\
		C \ar[r]^{\alpha}&D \ar[r]&1\\
	}
\]
commutative.
	\end{defn}
\begin{lem}\cite[Lemma 7.6.1]{ribes2000profinite}
	It is enough to consider $C$ and $D$ to be finite.
\end{lem}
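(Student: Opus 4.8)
The plan is to prove the substantive direction: granting that every embedding problem with $C$ and $D$ \emph{finite} is weakly solvable, I show the same for arbitrary pro-$\CC$ groups $C,D$. So fix an epimorphism $\alpha\colon C\to D$ of pro-$\CC$ groups and an epimorphism $\varphi\colon G\to D$, put $K=\ker\alpha$ (so $C/K\cong D$), and seek a continuous $\psi\colon G\to C$ with $\alpha\psi=\varphi$. I would run a Zorn's lemma argument on the poset $P$ whose elements are pairs $(N,\psi_N)$ with $N$ a closed normal subgroup of $C$ satisfying $N\subseteq K$ and $\psi_N\colon G\to C/N$ a continuous homomorphism whose composite with $C/N\to C/K\cong D$ is $\varphi$; order $P$ by declaring $(N,\psi_N)\le(N',\psi_{N'})$ when $N\supseteq N'$ and $\psi_{N'}$ lifts $\psi_N$ along $C/N'\to C/N$. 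The pair $(K,\varphi)$ lies in $P$, and an element with first coordinate $N=1$ is exactly a solution $\psi=\psi_1$.

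For Zorn's lemma I would verify that every chain $\{(N_i,\psi_{N_i})\}$ has an upper bound. Setting $N=\bigcap_i N_i$, a closed normal subgroup of $C$ contained in $K$, the filtered family $\{N_i\}$ gives $C/N\cong\varprojlim_i C/N_i$ as topological groups, and the compatible maps $\psi_{N_i}$ assemble into a continuous $\psi_N\colon G\to\varprojlim_i C/N_i=C/N$ lifting $\varphi$; thus $(N,\psi_N)\in P$ bounds the chain. Zorn's lemma then produces a maximal element $(N_0,\psi_0)$.

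The crux is to show $N_0=1$. If not, then since $N_0\neq 1$ is closed I may pick an open normal $U\unlhd C$ with $N_0\not\subseteq U$ and set $N_1=N_0\cap U\subsetneq N_0$, so that $UN_0/U\cong N_0/N_1$ is finite and nontrivial. Projecting $\psi_0$ to $C/UN_0$ gives $\bar\psi_0\colon G\to C/UN_0$; restricting to its image (and to the corresponding subgroup of $C/U$, using that $\CC$ is closed under subgroups) yields a genuinely \emph{finite} embedding problem with surjective maps and nontrivial kernel $\cong N_0/N_1$. The hypothesis solves it, producing a continuous $\chi\colon G\to C/U$ agreeing with $\psi_0$ over $C/UN_0$. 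Now I invoke the standard identification $C/(U\cap N_0)\cong C/U\times_{C/UN_0}C/N_0$: because $\chi$ and $\psi_0$ have the same projection $\bar\psi_0$ to $C/UN_0$, the pair $(\chi,\psi_0)$ defines a continuous $\psi_1\colon G\to C/N_1$ lifting $\psi_0$, hence $(N_1,\psi_1)\in P$ strictly exceeds $(N_0,\psi_0)$ --- a contradiction. Therefore $N_0=1$ and $\psi_0\colon G\to C$ solves the original problem.

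I expect the reduction-to-finite step to be the main obstacle. The extension $C/N_1\to C/N_0$ has finite kernel but infinite groups, so the hypothesis cannot be applied to it directly; the device of pushing forward along an open normal $U$ to manufacture a finite embedding problem, and then recombining the finite solution $\chi$ with the existing $\psi_0$ through the fibre-product description of $C/(U\cap N_0)$, is precisely what lets the finite hypothesis do real work. The only points demanding care are checking that the pushed-forward problem is a legitimate finite embedding problem (surjectivity of its maps, handled by passing to images) and that $(\chi,\psi_0)$ genuinely factors through the fibre product.
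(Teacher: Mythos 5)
Your proof is correct: the Zorn's lemma argument on pairs $(N,\psi_N)$, the passage to a genuinely finite embedding problem via an open normal $U\unlhd C$ with $N_0\not\subseteq U$ (corestricting to images so the maps are surjective), and the recombination through the identification $C/(U\cap N_0)\cong C/U\times_{C/UN_0}C/N_0$ all go through as you describe. The paper itself gives no proof --- it simply cites \cite[Lemma 7.6.1]{ribes2000profinite} --- and your argument is essentially the standard proof of that cited lemma, so there is nothing further to compare.
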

\begin{lem}\cite[Lemma 7.6.3]{ribes2000profinite}
	Let $G$ be a pro-$\CC$ group. Then $G$ is projective if and only if $G$ is isomorphic to a closed subgroup of a free pro-$\CC$ group.
\end{lem}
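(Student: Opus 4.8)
The plan is to prove the two implications separately, reducing throughout to finite $C$ and $D$ by the preceding lemma.

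For the direction \emph{projective $\Rightarrow$ closed subgroup of a free pro-$\CC$ group}, I would first record that every pro-$\CC$ group $G$ is a continuous quotient of a free one: choosing a set of topological generators converging to $1$ of cardinality $\omega_0(G)$, which exists by \cite[Proposition 2.4.4]{ribes2000profinite}, yields an epimorphism $\pi\colon F\twoheadrightarrow G$ from a free pro-$\CC$ group $F$. Now apply projectivity of $G$ to the embedding problem given by $\alpha=\pi$ and $\varphi=\mathrm{id}_G$; this is legitimate because the reduction to finite quotients upgrades finite projectivity to projectivity for all pro-$\CC$ embedding problems. The result is a continuous homomorphism $\sigma\colon G\to F$ with $\pi\sigma=\mathrm{id}_G$. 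In particular $\sigma$ is injective, and being a continuous injection from a compact space into a Hausdorff space it is a topological embedding onto a closed subgroup. Hence $G\cong\sigma(G)\le_c F$.

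For the converse the first step is that a free pro-$\CC$ group $F=F_{\CC}(X)$ is itself projective. Given a finite embedding problem $\alpha\colon C\twoheadrightarrow D$, $\varphi\colon F\to D$, I would define a pointed continuous map $X\cup\{\ast\}\to C$ by sending each $x$ to some element of $\alpha^{-1}(\varphi(x))$, taking the value $1$ whenever $\varphi(x)=1$. Since $X$ converges to $1$ and $D$ is discrete, $\varphi(x)=1$ for all but finitely many $x$, so this map is continuous; the universal property extends it to $\psi\colon F\to C$ with $\alpha\psi=\varphi$ on $X$, hence everywhere.

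The remaining step, that projectivity is inherited by closed subgroups, is where I expect the main obstacle, since the naive reductions fail: unlike the quotient situation above, a lifting problem posed on $H\le_c F$ need not extend to $F$, and passing to a finite quotient $F/N$ with $H\cap N\le\ker\varphi$ only reduces matters to a subgroup $HN/N$ of a finite group, which is typically not projective. The clean resolution I would use is cohomological: free pro-$\CC$ groups have cohomological dimension $\le 1$, cohomological dimension does not increase on passing to closed subgroups (restriction--corestriction, i.e. Shapiro's lemma), and a pro-$\CC$ group $H$ of cohomological dimension $\le 1$ is projective because the obstruction to weakly solving a finite embedding problem with abelian kernel $K$ lies in $H^2(H,K)$, which then vanishes, the general case following by filtering $K$ along a characteristic series with abelian quotients. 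Hence $H\le_c F$ has cohomological dimension $\le 1$ and is projective. Alternatively one can proceed purely group-theoretically, coinducing the embedding problem for $H$ up along the inclusion $H\le_c F$ via a continuous permutational wreath construction over the coset space $F/H$, solving the resulting problem for $F$ by the previous paragraph, and restricting the solution back to $H$; I would present the cohomological version as the primary route, since it concentrates all the difficulty in the two standard facts about $\mathrm{cd}$.
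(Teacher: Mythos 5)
Your two easy directions are correct and are the standard ones: presenting $G$ as a continuous quotient $\pi\colon F\twoheadrightarrow G$ of a free pro-$\CC$ group and splitting $\pi$ by projectivity (note the paper's definition of projectivity already allows pro-$\CC$ groups $C,D$, so the appeal to the reduction lemma is not even needed there), and the basis-converging-to-$1$ lifting argument showing free pro-$\CC$ groups are projective. Since the paper offers no proof --- the lemma is quoted from \cite[Lemma 7.6.3]{ribes2000profinite} --- the comparison is with the proof there, and the genuine gap is in your third step. Projectivity in this lemma is \emph{relative} to the variety $\CC$ (weak solvability only of embedding problems with $C\in\CC$), whereas the equivalence with $\operatorname{cd}\leq 1$ characterizes \emph{absolute} projectivity in the category of all profinite groups. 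Your key premise, that free pro-$\CC$ groups have $\operatorname{cd}\leq 1$, is false for a general variety in this paper's sense: the free pro-abelian group of infinite rank is $\prod_p\prod_{\mm}\mathbb{Z}_p$, which has infinite cohomological dimension yet, being free, is projective in the pro-abelian category. So your argument fails exactly for the abelian and nilpotent varieties, for which the paper actually invokes this lemma. Even for extension-closed $\CC$ --- which is a hypothesis of \cite[Lemma 7.6.3]{ribes2000profinite} that the transcription above drops --- the fact that $\operatorname{cd}_p(F)\leq 1$ for free pro-$\CC$ $F$ is itself a nontrivial theorem (the Sylow subgroups of a free pro-$\CC$ group are free pro-$p$) lying deeper than the lemma; and your d\'evissage of a general kernel ``along a characteristic series with abelian quotients'' treats only solvable kernels, while the standard reduction (Gruenberg) handles a nonabelian minimal kernel $K$ by passing to the normalizer of a Sylow subgroup of $K$ via a Frattini argument.

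The step you singled out as the main obstacle has an elementary resolution that you walked right past, and it is the one used in \cite{ribes2000profinite}: given a finite embedding problem $\alpha\colon C\twoheadrightarrow D$, $\varphi\colon H\twoheadrightarrow D$ for $H\leq_c F$, choose $N\unlhd_o F$ with $H\cap N\leq\ker\varphi$, but do \emph{not} pass to the finite quotient $F/N$; stop instead at the open subgroup $U=HN$ of $F$. Then $\varphi$ extends to $\widetilde{\varphi}\colon U\twoheadrightarrow D$ through $U/N=HN/N\cong H/(H\cap N)$, and $U$, being open in a free pro-$\CC$ group, is itself free pro-$\CC$ by the Nielsen--Schreier theorem for open subgroups; one then solves weakly for $U$ as in your second paragraph and restricts to $H$. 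This is precisely where extension-closedness of $\CC$ enters, and where the statement genuinely breaks without it: for $\CC$ the variety of abelian groups of exponent dividing $4$, the free pro-$\CC$ group of rank $\mm$ is $\prod_{\mm}\mathbb{Z}/4\mathbb{Z}$, which contains $\mathbb{Z}/2\mathbb{Z}$ as a closed subgroup, yet $\mathbb{Z}/2\mathbb{Z}$ admits no weak solution for the embedding problem given by $\mathbb{Z}/4\mathbb{Z}\twoheadrightarrow\mathbb{Z}/2\mathbb{Z}$ and the identity map. Your wreath-product alternative over the profinite coset space $F/H$ is both unnecessary and too vague to verify as sketched.
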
 
\begin{cor}\label{closed subgroup of projective}
	A closed subgroup of a pro-$\CC$ projective group is projective. 
\end{cor}
\begin{thm}\cite[Theorem 7.7.4]{ribes2000profinite}
	Let $G$ be a pro-$p$ group. Then $G$ is a free pro-$p$ group if and only if $G$ is projective.
\end{thm}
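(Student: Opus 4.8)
The plan is to prove both implications, the forward one being immediate from the machinery already assembled. If $G$ is free pro-$p$, then $G$ is in particular (isomorphic to) a closed subgroup of itself, which is a free pro-$\CC$ group for $\CC$ the variety of finite $p$-groups; hence by the characterization of projectives as closed subgroups of free pro-$\CC$ groups (Lemma 7.6.3), $G$ is projective. So the entire content lies in the converse: every projective pro-$p$ group is free pro-$p$.

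For the converse, I would first fix a minimal generating set of $G$ converging to $1$, of cardinality $d(G)$, and let $F$ be the free pro-$p$ group of the same rank, together with the continuous epimorphism $\varphi\colon F \to G$ sending a chosen basis of $F$ bijectively onto this generating set. By minimality, $\varphi$ induces an \emph{isomorphism} of Frattini quotients $\bar\varphi\colon F/\Phi(F) \xrightarrow{\sim} G/\Phi(G)$, where $\Phi(\cdot)$ denotes the Frattini subgroup $\overline{[\cdot,\cdot](\cdot)^p}$; this is because a basis of $F/\Phi(F)$ (an $\mathbb{F}_p$-vector space of dimension $d(G) = \dim_{\mathbb{F}_p} H^1(G,\mathbb{F}_p)$) is sent to a basis of $G/\Phi(G)$.

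The next step is to split $\varphi$ using the projectivity of $G$. Applying the definition of projectivity to the embedding problem with vertical map $\mathrm{id}_G\colon G \to G$ and horizontal epimorphism $\varphi\colon F \to G$ yields a continuous homomorphism $\sigma\colon G \to F$ with $\varphi \circ \sigma = \mathrm{id}_G$. In particular $\sigma$ is injective and realizes $G$ as the closed subgroup $\sigma(G) \le F$. Passing to Frattini quotients gives $\bar\varphi \circ \bar\sigma = \mathrm{id}$, and since $\bar\varphi$ is already an isomorphism, $\bar\sigma$ must be its inverse, hence surjective; this says precisely that $\sigma(G)\,\Phi(F) = F$. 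Finally I would invoke the nongenerator property of the Frattini subgroup of a pro-$p$ group: a closed subgroup $H \le F$ satisfying $H\,\Phi(F) = F$ must equal $F$, since generators of $H$ lifting a basis of $F/\Phi(F)$ topologically generate $F$ yet lie in the closed set $H$. Applying this with $H = \sigma(G)$ gives $\sigma(G) = F$, so $\sigma$ is also surjective and therefore an isomorphism $G \cong F$, exhibiting $G$ as free pro-$p$.

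The main obstacle I anticipate is not any single step in isolation but making the Frattini-quotient argument fully rigorous in the (possibly) infinite-rank setting: one must guarantee that a minimal generating set converging to $1$ exists and has cardinality $\dim_{\mathbb{F}_p} H^1(G,\mathbb{F}_p)$, that $\varphi$ genuinely induces an \emph{isomorphism} (not merely a surjection) on Frattini quotients, and that the nongenerator argument survives the passage to closures and infinite generating sets. The splitting itself is a painless direct application of projectivity, so the weight of the proof rests on the Frattini-theoretic bookkeeping.
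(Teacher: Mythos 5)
Your argument is correct, and it is essentially the standard proof of the cited result: the paper itself offers no proof here (it quotes \cite[Theorem 7.7.4]{ribes2000profinite} as a black box), and your section-plus-Frattini argument --- minimal generating set converging to $1$, epimorphism $\varphi\colon F\to G$ inducing an isomorphism on Frattini quotients, a section $\sigma$ from projectivity (legitimate under the paper's definition, which allows $C,D$ to be arbitrary pro-$\mathcal{C}$ groups), and $\sigma(G)\Phi(F)=F\Rightarrow\sigma(G)=F$ --- is precisely the argument in that source. The bookkeeping you flag is covered by standard facts there: every pro-$p$ group admits a minimal generating set converging to $1$ of cardinality $\dim_{\mathbb{F}_p}H^1(G,\mathbb{F}_p)$ (lift a basis of the Frattini quotient $\prod_I\mathbb{F}_p$ via a continuous section of profinite spaces), and the nongenerator property for closed subgroups holds in any profinite group since every proper closed subgroup lies in a maximal open one.
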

In \cite{bar-on_2021} the author proved that the profinite completion of a profinite projective group is projective.
\begin{cor}
	Let $G$ be a free pro-$p$ group. Then $\hat{G}$ is a free pro-$p$ group too.
\end{cor}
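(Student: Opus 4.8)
The plan is to combine two facts: that the profinite completion $\hat{G}$ is again a pro-$p$ group, and that it is projective. The characterization of free pro-$p$ groups as precisely the projective pro-$p$ groups (\cite[Theorem 7.7.4]{ribes2000profinite}) then closes the argument immediately. Projectivity is the easy half: by that same theorem $G$ itself is projective, and by the author's result \cite{bar-on_2021} the profinite completion of a projective profinite group is projective, so $\hat{G}$ is projective. The genuinely substantive point, and the one I expect to be the main obstacle, is establishing that $\hat{G}$ is a pro-$p$ group at all --- equivalently, that \emph{every} finite abstract quotient of $G$ (not merely the continuous ones) is a finite $p$-group, since $\hat{G}=\varprojlim_{N} G/N$ ranges over all finite-index normal subgroups $N\trianglelefteq G$.

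To prove this lemma I would argue via the $q$-power map for primes $q\neq p$. Fix such a $q$. In any finite $p$-group the map $x\mapsto x^{q}$ is a bijection, since $q$ is coprime to the group order and hence every element has a unique $q$-th root. Writing $G=\varprojlim_{U} G/U$ over its open normal subgroups $U$, with each $G/U$ a finite $p$-group, the uniqueness of these roots makes them compatible across the inverse system, and it follows that the $q$-power map on $G$ is itself a bijection, in particular surjective. Consequently, for any finite quotient $Q=G/N$ the induced $q$-power map $Q\to Q$ is surjective, hence bijective because $Q$ is finite. But if some prime $q\neq p$ divided $|Q|$, Cauchy's theorem would produce an element $x\in Q$ of order $q$, and then $x^{q}=1=1^{q}$ would contradict injectivity. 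Therefore no prime other than $p$ divides $|Q|$, so $Q$ is a $p$-group.

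With the lemma in hand, $\hat{G}=\varprojlim_{N}G/N$ is an inverse limit of finite $p$-groups, hence a pro-$p$ group. Combining this with the projectivity of $\hat{G}$ established above, \cite[Theorem 7.7.4]{ribes2000profinite} applies in the reverse direction and yields that $\hat{G}$ is a free pro-$p$ group. The only delicate bookkeeping is to confirm that the notion of projectivity transported by \cite{bar-on_2021} matches the pro-$p$ setting required by the theorem; since projectivity of $\hat{G}$ as a profinite group restricts to projectivity in the subcategory of pro-$p$ groups, and $\hat{G}$ is now known to belong to that subcategory, this presents no difficulty.
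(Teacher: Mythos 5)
Your proposal is correct and takes essentially the same route as the paper: projectivity of $\hat{G}$ via the main theorem of \cite{bar-on_2021}, combined with the observation that $\hat{G}$ is pro-$p$ because every finite \emph{abstract} quotient of a pro-$p$ group is a $p$-group, and then \cite[Theorem 7.7.4]{ribes2000profinite}. The only difference is that where the paper simply cites \cite[Proposition 4.2.3]{ribes2000profinite} for the order-divisibility of abstract finite quotients, you prove it directly in the pro-$p$ case via bijectivity of the $q$-power map for $q\ne p$ --- which is precisely the standard argument behind that proposition, so the substance coincides.
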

\begin{proof}
	By the main Theorem of \cite{bar-on_2021}, $\hat{G}$ is projective. So the only thing left to show is that $\hat{G}$ is a pro-$p$ group. By the explicit construction of $\hat{G}$, its continuous finite quotients are exactly the finite abstract quotients of $G$. By \cite[Proposition 4.2.3]{ribes2000profinite}, the order of every finite (abstract) quotient of a profinite group $G$ divides the order of $G$. So we are done.
\end{proof}
Now we want to deal with the completions of higher order. Our main theorem will be the following:
\begin{thm}\label{free pro p}
	Let $G$ be a free pro-$p$ group of infinite rank. Then, for every ordinal $\alpha$, $G_{\alpha}$ is a free pro-$p$ group.
\end{thm}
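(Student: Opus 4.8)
The plan is to prove Theorem \ref{free pro p} by transfinite induction on $\alpha$, carrying along the invariant that each $G_\alpha$ is a free pro-$p$ group of the cardinal rank computed in the introduction (namely $\omega_0(G_\alpha)=2^{2^{\omega_0(G_\beta)}}$ in the successor case and $2^{\sup_{\beta<\alpha}\omega_0(G_\beta)}$ in the limit case). The base case $\alpha=0$ is the hypothesis that $G=G_0$ is free pro-$p$ of infinite rank. The key structural tool is the characterization \cite[Theorem 7.7.4]{ribes2000profinite} that for pro-$p$ groups, free is equivalent to projective, which reduces the whole theorem to showing each $G_\alpha$ is both projective and pro-$p$.

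\medskip

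For the successor step $\alpha=\beta+1$, I would argue as follows. By the induction hypothesis $G_\beta$ is free pro-$p$, hence projective, so by the main theorem of \cite{bar-on_2021} its profinite completion $G_\alpha=\widehat{G_\beta}$ is projective. That $G_\alpha$ is again a pro-$p$ group follows exactly as in the Corollary just proved: the continuous finite quotients of $\widehat{G_\beta}$ are precisely the finite abstract quotients of $G_\beta$, and by \cite[Proposition 4.2.3]{ribes2000profinite} each such quotient has order dividing the order of the profinite group $G_\beta$, hence is a $p$-group. Invoking \cite[Theorem 7.7.4]{ribes2000profinite} again, $G_\alpha$ is free pro-$p$. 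So the successor case is essentially a transcription of the already-proved Corollary, and presents no real difficulty.

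\medskip

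The limit case is where the actual work lies. Here $G_\alpha=\widehat{H_\alpha}$ where $H_\alpha=\dirlim_{\beta<\alpha}G_\beta$ is an abstract direct limit of the earlier profinite groups along the injective compatible maps $\varphi_{\gamma\beta}$. The two things to establish are that $G_\alpha$ is pro-$p$ and that it is projective. For the pro-$p$ property I would again examine the finite continuous quotients of $\widehat{H_\alpha}$, which are the finite abstract quotients of $H_\alpha$; since any finite quotient of a direct limit factors through some $G_\beta$ (a homomorphism from $H_\alpha$ to a finite group is determined by its restrictions to the $G_\beta$ and, the image being finite, stabilizes), every finite quotient of $H_\alpha$ is a finite quotient of some $G_\beta$ and hence a $p$-group by the inductive hypothesis together with \cite[Proposition 4.2.3]{ribes2000profinite}. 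For projectivity, the natural route is to show that the abstract projectivity of the $G_\beta$ passes to the direct limit $H_\alpha$ and then that profinite completion preserves projectivity; a lifting against a finite cover $C\twoheadrightarrow D$ of a map $H_\alpha\to D$ should reduce, via the compatible system, to a lifting problem over some single $G_\beta$, which is solvable by the induction hypothesis, and these partial lifts should be assembled compatibly along the directed system. I expect the main obstacle to be precisely this assembly: one must check that the solutions chosen at successive stages $G_\beta$ can be made to agree on the transition maps $\varphi_{\gamma\beta}$ so that they glue to a genuine homomorphism out of $H_\alpha$, and that the resulting lift descends to the completion $\widehat{H_\alpha}=G_\alpha$. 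I would handle this by working with the abstract embedding-problem formulation of Proposition \ref{free_completion_of_abstract_group}, reducing to minimal normal $K$ and using the directedness to choose compatible solutions, rather than trying to lift projectivity directly at the profinite level.
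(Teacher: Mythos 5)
Your overall architecture --- reduce everything to ``projective $+$ pro-$p$'' via \cite[Theorem 7.7.4]{ribes2000profinite}, handle successor steps by the main theorem of \cite{bar-on_2021} together with the order-division argument of \cite[Proposition 4.2.3]{ribes2000profinite}, and get the pro-$p$ property at limits by factoring finite quotients of $H_\alpha$ through some $G_\beta$ --- coincides with the paper's (its Lemma \ref{orders in the chain} and Proposition \ref{projective in the chain}). But at the one point you yourself flag as the main obstacle, projectivity at limit ordinals, your sketch does not close, and the mechanism you propose would fail as stated. Choosing lifts ``compatibly along the directed system'' by recursion breaks down for two reasons. First, the restriction map from weak solutions over $G_{\beta'}$ to weak solutions over $G_\beta$ (for $\beta<\beta'$) need not be surjective, so a solution chosen at stage $\beta$ may admit no extension at stage $\beta'$; a transfinite recursion of arbitrary choices can die at any stage, and directedness gives no way to repair earlier choices. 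Second, the natural device for crossing a successor stage --- apply the universal property of $G_{\beta+1}=\widehat{G_\beta}$ to the abstract homomorphism $\psi_\beta:G_\beta\to C$ --- yields a continuous map that agrees with the constraint $\varphi|_{G_{\beta+1}}$ only on the topologically dense subgroup $G_\beta$; since $\varphi$ is a bare abstract homomorphism on $H_\alpha$, its restriction to $G_{\beta+1}$ is not determined by its values on $G_\beta$, so the extended map need not solve the problem at level $\beta+1$. Your fallback, Proposition \ref{free_completion_of_abstract_group} with $K$ minimal normal, concerns how many solutions are needed, not how to glue them, so it does not touch this issue either.

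What resolves this in the paper's Proposition \ref{projective in the chain} is a compactness argument absent from your proposal: for each $\beta$ beyond some $\gamma$ (past which $\varphi|_{G_\beta}$ is surjective), the set $WS(\beta)$ of abstract weak solutions is a nonempty \emph{closed} subset of the profinite space $C^{G_\beta}$ (both the homomorphism condition and the compatibility condition $f\circ\psi=\varphi|_{G_\beta}$ are closed conditions in the product topology), the restriction maps $WS(\beta)\to WS(\delta)$ are continuous, and by \cite[Proposition 1.1.4]{ribes2000profinite} the inverse limit of this system of nonempty compact spaces is nonempty; an element of that inverse limit is precisely a weak solution on $H_\alpha$, which by \cite[Lemma 2]{bar-on_2021} suffices for projectivity of $G_\alpha=\widehat{H_\alpha}$. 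Note also that your rank bookkeeping is unnecessary: since a projective pro-$p$ group is free of whatever rank it happens to have, no rank invariant needs to be carried through the induction, and the paper carries none.
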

We will prove it in two steps:
\begin{lem}\label{orders in the chain}
	Let $G$ be a nonstrongly complete profinite group. Then for every ordinal $\alpha$, $o(G_{\alpha})=o(G)$.
\end{lem}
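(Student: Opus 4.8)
The plan is to argue by transfinite induction on $\alpha$, with the base case $G_0=G$ trivial. Throughout I will use two facts already invoked in the excerpt: first, that the finite continuous quotients of a profinite completion $\widehat{A}$ of an abstract group $A$ are exactly the finite abstract quotients of $A$; and second, \cite[Proposition 4.2.3]{ribes2000profinite}, which guarantees that the order of \emph{any} finite abstract quotient of a profinite group $P$ divides $o(P)$. Recalling that $o(P)$, as a supernatural number, is the least common multiple of the orders of the finite continuous quotients of $P$, to prove $o(G_\alpha)=o(G)$ it suffices to establish the two divisibilities $o(G_\alpha)\mid o(G)$ and $o(G)\mid o(G_\alpha)$, and for each it is enough to compare the finite quotients that occur on either side.

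For the successor step, suppose $o(G_\beta)=o(G)$ and set $G_{\beta+1}=\widehat{G_\beta}$. By the first fact, each finite continuous quotient of $G_{\beta+1}$ is a finite abstract quotient of $G_\beta$, whose order divides $o(G_\beta)$ by \cite[Proposition 4.2.3]{ribes2000profinite}; taking the least common multiple gives $o(G_{\beta+1})\mid o(G_\beta)$. Conversely, every finite continuous quotient of $G_\beta$ is in particular a finite abstract quotient of $G_\beta$, hence a finite continuous quotient of $\widehat{G_\beta}=G_{\beta+1}$, so $o(G_\beta)\mid o(G_{\beta+1})$. Combined with the inductive hypothesis this yields $o(G_{\beta+1})=o(G)$.

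For a limit ordinal $\alpha$, write $G_\alpha=\widehat{H_\alpha}$ with $H_\alpha={\dirlim}_{\beta<\alpha}\{G_\beta,\varphi_{\gamma\beta}\}$, so the finite continuous quotients of $G_\alpha$ are the finite abstract quotients of $H_\alpha$. The divisibility $o(G_\alpha)\mid o(G)$ is the easy half: since the structure maps $\varphi_{\gamma\beta}$ are injective, $H_\alpha$ is the increasing union of the images of the $G_\beta$, so any surjection $\pi:H_\alpha\to Q$ onto a finite group restricts to a surjection on some $G_{\beta_0}$ (the increasing images $\pi(G_\beta)$ must stabilize at $Q$). Thus $Q$ is a finite abstract quotient of $G_{\beta_0}$ and $|Q|$ divides $o(G_{\beta_0})=o(G)$ by the inductive hypothesis and \cite[Proposition 4.2.3]{ribes2000profinite}.

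The reverse divisibility $o(G)\mid o(G_\alpha)$ is the step I expect to be the main obstacle, since a finite continuous quotient of $G=G_0$ does not visibly extend across the injection $\iota_0:G_0\to H_\alpha$. The plan is to construct such an extension by an inner transfinite recursion along the tower: given a finite continuous quotient $q_0:G_0\to Q$, at a successor stage a continuous quotient $q_\beta:G_\beta\to Q$ is a finite abstract quotient of $G_\beta$, hence (by the first fact) extends to a continuous quotient $q_{\beta+1}:\widehat{G_\beta}=G_{\beta+1}\to Q$ compatible with $\varphi_{\beta,\beta+1}$; at an inner limit $\lambda<\alpha$ the compatible family $\{q_\beta\}_{\beta<\lambda}$ defines a map $H_\lambda\to Q$, a finite abstract quotient of $H_\lambda$, which extends to a continuous $q_\lambda:\widehat{H_\lambda}=G_\lambda\to Q$. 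The resulting compatible family $\{q_\beta\}_{\beta<\alpha}$ corresponds, by the universal property of the direct limit, to a surjection $H_\alpha\to Q$, so $Q$ is a finite abstract quotient of $H_\alpha$ and $|Q|$ divides $o(G_\alpha)$. Taking the least common multiple over all finite continuous quotients $Q$ of $G$ gives $o(G)\mid o(G_\alpha)$, completing the limit step and hence the induction.
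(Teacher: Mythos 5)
Your proof is correct and follows the paper's argument essentially step for step: the successor case via identifying the finite continuous quotients of $\widehat{G_\beta}$ with the finite abstract quotients of $G_\beta$ together with \cite[Proposition 4.2.3]{ribes2000profinite}, and the easy half of the limit case by factoring any finite quotient of $H_\alpha$ through some $G_{\beta_0}$. The only difference is that where the paper invokes \cite[Proposition 5]{BarOn2018tower} to lift a finite quotient of $G$ to a finite quotient of $H_\alpha$, you prove that lifting inline by your inner transfinite recursion, which is precisely the content of the cited proposition, so the route is the same.
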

\begin{proof}
	Recall that the order of a profinite group equals the $\operatorname{lcm}$ of the orders of its finite continuous quotients. We will prove the claim by transfinite induction:
	
	$\alpha=0$ is obvious.
	
	$\alpha=\beta+1$: By induction assumption, $o(G_{\beta})=o(G)$. We will prove that for every profinite group $H$, $o(H)=o(\hat{H})$. Indeed, $o(\hat{H})$ equals the $\operatorname{lcm}$ of the orders of all its finite continuous quotients, which are precisely the finite abstract quotients of $H$. Since this set contains all the finite continuous quotients of $H$, then $o(\hat{H})\geq o(H)$. On the other hand, by \cite[Proposition 4.2.3]{ribes2000profinite} the orders of the abstract finite quotients of $H$ divides $o(H)$. So, $o(\hat{H)}\leq o(H)$ and we are done.  
	
	$\alpha$ is limit: The order of $G_{\alpha}$ is the $\operatorname{lcm}$ of the orders of its finite continuous quotients which are precisely the finite quotients of $H_{\alpha}$. On one hand, being the direct limit of $\{G_{\beta}\}_{\beta<\alpha}$ every finite quotient of $H_{\alpha}$ is in fact a finite (abstract) quotient of some $G_{\beta}$ for $\beta<\alpha$. So by \cite[Proposition 4.2.3]{ribes2000profinite} and induction assumption, its order divides $o(G_\beta)=o(G)$. On the other hand, by \cite[Proposition 5]{BarOn2018tower}, every finite quotient of $G$ cab by lifted to a finite quotient of $H_{\alpha}$, so $o(G_{\alpha})=\operatorname{lcm}\{H_{\alpha}/U\}_{U\unlhd_f H_{\alpha}}=o(G)$.
\end{proof}
\begin{prop}\label{projective in the chain}
	Let $G$ be a nonstrongly complete pro-$\CC$ projective group. Then for every ordinal $\alpha$, $G_{\alpha}$ is projective. 
\end{prop}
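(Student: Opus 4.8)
The plan is to argue by transfinite induction on $\alpha$, using the main theorem of \cite{bar-on_2021} at successor stages and a separate, more delicate argument at limit stages. The base case $\alpha=0$ is the hypothesis. For a successor $\alpha=\beta+1$ the induction hypothesis gives that $G_\beta$ is projective, so $G_\alpha=\widehat{G_\beta}$ is projective directly by \cite{bar-on_2021}; since the ability to solve every finite embedding problem entails solving the pro-$\CC$ ones, this conclusion suffices for pro-$\CC$ projectivity. So the whole content is the limit case.

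For a limit ordinal $\alpha$ I would first translate projectivity of $G_\alpha=\widehat{H_\alpha}$ into a statement about the abstract group $H_\alpha=\dirlim_{\beta<\alpha}G_\beta$. By \cite[Lemma 7.6.1]{ribes2000profinite} it suffices to weakly solve finite embedding problems, and exactly as in the proof of Proposition \ref{free_completion_of_abstract_group}, a continuous finite embedding problem for $\widehat{H_\alpha}$ corresponds, via the universal property and the density of $H_\alpha$, to a finite \emph{abstract} embedding problem for $H_\alpha$, with continuous weak solutions corresponding to abstract ones. Thus the goal becomes: every abstract embedding problem $\varphi\colon H_\alpha\twoheadrightarrow D$, $\alpha\colon C\twoheadrightarrow D$ with $C,D$ finite admits an abstract weak solution $\psi\colon H_\alpha\to C$.

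To build such a $\psi$ I would run an inner transfinite recursion producing a coherent family $\psi_\beta\colon G_\beta\to C$ with $\alpha\circ\psi_\beta=\varphi|_{G_\beta}$ and $\psi_{\beta'}\circ\varphi_{\beta\beta'}=\psi_\beta$ for $\beta<\beta'$; the colimit of such a family is the desired $\psi$. At a successor step one must extend $\psi_\beta\colon G_\beta\to C$ across the canonical dense inclusion $G_\beta\hookrightarrow\widehat{G_\beta}=G_{\beta+1}$ so as to lift the prescribed (and in general discontinuous) restriction $\varphi|_{G_{\beta+1}}$; at an inner limit $\lambda<\alpha$ one glues the $\psi_\beta$ ($\beta<\lambda$) into a map on $H_\lambda=\dirlim_{\beta<\lambda}G_\beta$ and then extends across $H_\lambda\hookrightarrow\widehat{H_\lambda}=G_\lambda$. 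Both steps are instances of one problem: given a finite embedding problem for a completion $\widehat A$ together with a weak solution already defined on the dense abstract subgroup $A$, extend it to all of $\widehat A$ compatibly with $\varphi$. Existence of \emph{some} lift at each stage is guaranteed because $\widehat A$ is projective (at every inner stage the relevant completion is some $G_{\beta+1}$ with $\beta+1<\alpha$, covered by the outer induction), but the genuine difficulty is to choose these lifts so that they extend the ones already committed to.

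The main obstacle is exactly this coherent-extension step, and I expect it to carry the real weight of the proof. Ordinary projectivity of $\widehat A$ only produces lifts of \emph{continuous} maps and is blind to the prescribed discontinuous $\varphi$, and it does not respect a partial solution on $A$; equivalently, writing a lift as a section of the pullback extension $1\to\ker\alpha\to C\times_D\widehat A\to\widehat A\to 1$, one must split this extension over $\widehat A$ so as to extend a given splitting over the dense subgroup $A$. I would attack it either (i) by a compactness / inverse-limit argument over the well-ordered tower, showing the inverse system of nonempty solution sets has nonempty limit, which requires cutting those sets down to something compact (e.g. by fixing coset representatives modulo $\ker\alpha$); or (ii) by reworking the extension construction of \cite{bar-on_2021} relatively, applying its technique to the quotient problem ``modulo $A$'' rather than to $\widehat A$ outright and invoking Corollary \ref{closed subgroup of projective} to keep the auxiliary subgroups projective. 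Throughout, Lemma \ref{orders in the chain} together with \cite[Proposition 5]{BarOn2018tower} provides a useful safeguard that the finite quotients appearing never leave $\CC$, so the recursion stays inside the variety and the resulting $G_\alpha$ is pro-$\CC$ projective rather than merely projective.
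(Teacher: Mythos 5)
Your reduction steps are sound and match the paper: transfinite induction, successor case by the main theorem of \cite{bar-on_2021}, and in the limit case the translation of projectivity of $G_{\alpha}=\widehat{H_{\alpha}}$ into abstract weak solvability of finite embedding problems for $H_{\alpha}$ (the paper quotes \cite[Lemma 2]{bar-on_2021} for exactly this). But the limit case itself is not proved. You set up an inner recursion building a coherent family $\psi_{\beta}\colon G_{\beta}\to C$, and then correctly observe that its successor step --- extending a splitting already fixed on the dense subgroup $G_{\beta}$ across $G_{\beta}\hookrightarrow\widehat{G_{\beta}}$ --- is not delivered by projectivity; you leave this as an acknowledged \emph{main obstacle} with two unexecuted strategies. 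That relative extension problem is a genuine obstruction, and the paper never solves it, because its proof never commits to partial solutions stage by stage at all.

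The paper's actual argument is your strategy (i), applied globally and without the repair you think it needs. Choose $\gamma$ so that $\varphi|_{G_{\beta}}$ is surjective for all $\gamma\leq\beta<\alpha$ (possible since $D$ is finite). For each such $\beta$ let $WS(\beta)$ be the set of \emph{all} abstract weak solutions $\psi\colon G_{\beta}\to C$ with $f\circ\psi=\varphi|_{G_{\beta}}$; it is nonempty by the outer induction hypothesis together with \cite{bar-on_2021}. No ``cutting down to something compact'' is required: since $C$ is finite, $C^{G_{\beta}}$ with the product topology is already compact, and both the homomorphism condition and the compatibility condition are intersections of conditions each depending on finitely many coordinates, so $WS(\beta)$ is a nonempty \emph{closed}, hence compact, subset as it stands. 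The restriction maps $WS(\beta)\to WS(\delta)$ for $\gamma\leq\delta\leq\beta$ are the coordinate projections $C^{G_{\beta}}\to C^{G_{\delta}}$ (using that the connecting maps of the tower are injective, so $G_{\delta}$ sits inside $G_{\beta}$), hence continuous, and by \cite[Proposition 1.1.4]{ribes2000profinite} the inverse limit ${\invlim}_{\gamma\leq\beta<\alpha}WS(\beta)$ is nonempty; an element of it is precisely a compatible family, i.e.\ a weak solution on $H_{\alpha}$. In other words, the coherence you try to impose one stage at a time --- the step your recursion cannot complete --- is obtained for free by taking the full solution sets along the whole tower simultaneously and letting compactness do the gluing. (Your closing remark about staying inside $\CC$ is not needed here: the conclusion of the proposition is only projectivity, handled through finite embedding problems via \cite[Lemma 7.6.1]{ribes2000profinite}; pro-$\CC$-ness of $G_{\alpha}$ is established separately where needed, e.g.\ via Lemma \ref{orders in the chain} in the pro-$p$ case.)
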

\begin{proof}
	This proposition was proved in \cite{cohomologicalproperties}. We give here another proof. Let $\alpha$ be some ordinal and assume that $G_{\beta}$ is projective for every $\beta<\alpha$.
	
	$\alpha=0$: this is the assumption of the proposition.
	
	$\alpha=\beta+1$: By induction assumption $G_{\beta}$ is profinite projective. So this is exactly the Main Theorem of \cite{bar-on_2021}.
	
	$\alpha$ is limit: By \cite[Lemma 2]{bar-on_2021} in order to prove that $G_{\alpha}$ is projective, we need to prove that every finite (abstract) embedding problem for $H_{\alpha}$ is weakly solvable. Let   
		\[
	\xymatrix@R=14pt{  &H_{\alpha}\ar@{->>}[d]^{\varphi}  & \\
		C \ar[r]^{f}&D \ar[r]&1\\
	}
	\]
	be such an embedding problem. For all $\beta<\alpha$ we look at the reduced homomorphism $\varphi|_{G_{\beta}}:G_{\beta}\to D$. Since $D$ is finite, there exists some $\gamma$ such that for all $\gamma\leq \beta < \alpha$ $\varphi|_{G_{\beta}}$ is an (abstract) epimorphism.
	by assumption each $G_{\beta}$ is projective, so by \cite{bar-on_2021}, the set $WS(\beta)$ of abstract weak solutions for $G_{\beta}$ is nonempty. Notice that $WS(\beta)\subset C^{G_{\beta}}$, the set of all functions from $G_{\beta}$ to $C$, which is a profinite space with the product topology. Moreover, being a homomorphism means that for all $g,h\in G_{\beta}$, $\psi(g)\psi(h)=\psi(gh)$, thus, the set of all homomorphisms is precisely $$\bigcap_{g,h\in G_{\beta}}(\bigcup_{a,b\in C}\{a\}_g\times \{b\}_h\times \{ab\}_{gh}\times \prod_{g,h,gh\ne x\in G_{\beta}}C)$$ which is a closed subset. In addition, being a solution is equal to be contained in $\prod_{g\in G_{\beta}}f^{-1}[\{\varphi(g)\}]$ which is a closed subset too. Thus, $WS(\beta)$ is a nonempty profinite set. Eventually, the reduction function $WS(\beta)\to WS(\delta)$ when $\gamma\leq \delta<\beta$ is nothing but the projection map $C^{G_{\beta}}\to C^{G_{\delta}}$, considered $G_{\delta}$ as a subgroup of $G_{\beta}$, and thus continuous. So, by \cite[Proposition 1.1.4]{ribes2000profinite} the inverse limit ${\invlim}_{\gamma\leq \alpha < \beta}WS(\alpha)$ is nonempty. Eventually, an element  in the inverse limit is precisely a weak solution for $H_{\alpha}$.
\end{proof}
\begin{proof}[Proof of Theorem \ref{free pro p}]
Let $G$ be a free pro-$p$ group of infinite rank. Then by Lemma \ref{basic lemma} $G$ is not strongly complete. By Lemma \ref{orders in the chain}, for every ordinal $\alpha$, $G_{\alpha}$ is a pro-$p$ group. Thus, $G_{\alpha}$ is a free pro-$p$ if and only if it is projective. So by Proposition \ref{projective in the chain} we are done. 
\end{proof}
	\subsection*{The pronilpotent case}
	A pronilpotent group is an inverse limit of finite nilpotent groups. By \cite[Proposition 2.3.8]{ribes2000profinite} every pronilpotent group is a direct product of pro-$p$ groups. 
		\begin{lem}\label{Lemma pronilpotent maps}
		Let $G$ be a pronilpotent group, i.e $G=\prod _pG_p$ where $G_p$ denotes a $p$ - group. Then every abstract epimorphism $\varphi:G\to H$ where $H$ is a finite $q$ - group for some prime $q$, projects through the natural projection on $G_q$. 
	\end{lem}
	\begin{proof}
		Look at $\varphi'=\varphi|_{\prod_{p\ne q}G_p}:\prod_{p\ne q}G_p\to H$. Assume by contradiction that the homomorphism is not trivial. Then, $Im(\varphi')$ is a $q$ - subgroup. But, by \cite[Proposition 4.2.1]{ribes2000profinite} , $|H|\mid |\prod_{p\ne q}G_p|$, a contradiction.
	\end{proof}
The previous Lemma has the following application:
	\begin{prop}\label{completiom_of_pronilpotent}
		Let $G=\prod_{p}G_p$ be a pronilpotent group. Then, $\widehat{G}\cong \prod_p\widehat{G_p}$. 
	\end{prop}
	\begin{proof}
		Let $\varphi:G\to H$ be an abstract epimorphism onto a finite group. By \cite[Corollary 4.2.4]{ribes2000profinite}, $H$ is nilpotent, Denote $H=H_{p_1}\times\cdots\times H_{p_n}$ where $H_{p_i}$ is a finite $p_i$-group. Then $\varphi=(\varphi_{p_1},...,\varphi_{p_n})$, where $\varphi_{p_i}:G\to H_{p_i}$. By \ref{Lemma pronilpotent maps}, each $\varphi_{p_i}$ projects through $G_{p_i}$. So, $$\widehat{G}={\invlim}_{K\unlhd_f G}G/K\cong {\invlim} G_{p_{i_1}}/K_{p_{i_1}}\times\cdots G_{p_{i_m}}/K_{p_{i_m}}$$ 
	\end{proof}
	\begin{cor}
		Let $G=\prod G_p$ be a pronilpotent group. Then $G$ is strongly complete iff for every $p$, $G_p$ is strongly complete.
	\end{cor}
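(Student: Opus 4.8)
The plan is to reduce the statement to a fact about the canonical completion map and then invoke Proposition \ref{completiom_of_pronilpotent}. Recall that a profinite group $H$ is strongly complete precisely when the canonical homomorphism $i_H\colon H\to\widehat H$ is an isomorphism; since every profinite group is residually finite, $i_H$ is always injective, so strong completeness is equivalent to surjectivity of $i_H$. Thus it suffices to show that $i_G\colon G\to\widehat G$ is surjective if and only if each $i_{G_p}\colon G_p\to\widehat{G_p}$ is surjective.

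First I would make the isomorphism of Proposition \ref{completiom_of_pronilpotent} compatible with the completion maps. Write $\pi_p\colon G\to G_p$ for the projection, and let $\theta\colon\widehat G\to\prod_p\widehat{G_p}$ be the isomorphism furnished by the proposition; concretely $\theta=(\widehat{\pi_p})_p$, where $\widehat{\pi_p}\colon\widehat G\to\widehat{G_p}$ is the continuous homomorphism induced by $\pi_p$ under functoriality of the completion. Naturality of the completion gives, for every $p$, the identity $\widehat{\pi_p}\circ i_G=i_{G_p}\circ\pi_p$. Evaluating on $g=(g_p)_p\in G$ yields $\theta(i_G(g))=(i_{G_p}(g_p))_p$, so under $\theta$ the map $i_G$ is identified with the product map $\prod_p i_{G_p}\colon\prod_p G_p\to\prod_p\widehat{G_p}$.

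With this identification in hand the conclusion is immediate. A product of group homomorphisms $\prod_p i_{G_p}$ is surjective if and only if each factor $i_{G_p}$ is surjective: surjectivity of the product follows by assembling preimages coordinatewise, while conversely one recovers surjectivity of a single $i_{G_q}$ by projecting onto the $q$th coordinate. Combining this with the previous two paragraphs, $G$ is strongly complete iff $i_G$ is surjective, iff $\prod_p i_{G_p}$ is surjective, iff each $i_{G_p}$ is surjective, iff each $G_p$ is strongly complete.

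The only genuine point requiring care, and the step I would write out most carefully, is the compatibility asserted in the second paragraph: namely that the isomorphism $\widehat G\cong\prod_p\widehat{G_p}$ produced in the proof of Proposition \ref{completiom_of_pronilpotent} really does intertwine the canonical map $i_G$ with $\prod_p i_{G_p}$. Everything else is a formal consequence of the universal property of the completion together with the elementary behaviour of surjections under direct products.
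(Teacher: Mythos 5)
Your proof is correct and takes the same route as the paper: the corollary is stated there without proof as an immediate consequence of Proposition \ref{completiom_of_pronilpotent}, and your argument simply makes explicit the details the paper leaves implicit, namely that the natural isomorphism $\widehat{G}\cong\prod_p\widehat{G_p}$ intertwines $i_G$ with $\prod_p i_{G_p}$, and that strong completeness amounts to surjectivity of the (always injective) canonical map. The coordinatewise surjectivity argument then finishes it exactly as intended.
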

	\begin{prop}\label{sylow subgroups of free pronilpotent groups}
		Let $G=\prod_pG_p$ be a pronilpotent group. Then, $G$ is a free pronilpotent group of rank $\mm$ iff for every $p$, $G_p$ is a free pro - $p$ group of rank $\mm$.
	\end{prop}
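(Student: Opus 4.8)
The plan is to deduce the equivalence directly from the embedding-problem criterion of Proposition \ref{minimal normal}, exploiting the decomposition $G=\prod_p G_p$ together with Lemma \ref{Lemma pronilpotent maps}. The starting point is a purely group-theoretic observation: a nontrivial minimal normal subgroup $K$ of a finite nilpotent group $C$ is central, since every nontrivial normal subgroup of a nilpotent group meets the center and minimality then forces $K\le Z(C)$; being central and minimal it can have no proper nontrivial (automatically normal) subgroups, so $K\cong\mathbb{Z}/p$ for some prime $p$. Thus $K$ lies inside the single Sylow factor $C_p$, and writing $C=\prod_q C_q$ the quotient map $\alpha\colon C\to C/K$ is the identity on each $C_q$ with $q\ne p$ and is the projection $C_p\to C_p/K$ on the $p$-component. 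In this sense every minimal-kernel embedding problem for the pronilpotent group $G$ is concentrated at a single prime.

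The second ingredient is that continuous homomorphisms into finite nilpotent groups decompose. By Lemma \ref{Lemma pronilpotent maps}, composing a continuous epimorphism $\psi\colon G\to C=\prod_q C_q$ with each projection $C\to C_q$ gives a map factoring through $G_q$, so $\psi=\prod_q\psi_q$ with $\psi_q\colon G_q\to C_q$; by coprimality of the Sylow orders $\psi$ is surjective iff each $\psi_q$ is. Combining this with the previous paragraph, for a problem $1\to K\to C\to C/K\to 1$ with $K\cong\mathbb{Z}/p\le C_p$ and a prescribed epimorphism $\varphi=\prod_q\varphi_q$, any solution $\psi=\prod_q\psi_q$ must satisfy $\psi_q=\varphi_q$ for $q\ne p$, while $\psi_p$ ranges over the surjective lifts of $\varphi_p\colon G_p\to C_p/K$ through $C_p\to C_p/K$. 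Hence the solution set of the nilpotent problem for $G$ is in canonical bijection with that of the pro-$p$ problem $1\to K\to C_p\to C_p/K\to 1$ for $G_p$; in particular $G$ has $\mm$ solutions iff $G_p$ does, and conversely every minimal-kernel pro-$p$ problem for $G_p$ arises this way from a nilpotent one for $G$.

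It then remains to match the local weights. Using \cite[Proposition 2.3.8]{ribes2000profinite}, every open normal subgroup of $G=\prod_p G_p$ contains $\prod_{p\notin S}G_p$ for some finite set $S$ of primes, hence is pulled back from an open normal subgroup of the finite sub-product $\prod_{p\in S}G_p$; counting over the countably many finite $S$ and using $\mm^{|S|}=\mm$ shows $\omega_0(G)=\mm$ whenever each $\omega_0(G_p)=\mm$, while the projections $G\to G_p$ give $\omega_0(G_p)\le\omega_0(G)$ in general. With these pieces both directions follow from Proposition \ref{minimal normal}: if each $G_p$ is free pro-$p$ of rank $\mm$, then every minimal-kernel nilpotent problem for $G$ has $\mm$ solutions and $\omega_0(G)=\mm$, so $G$ is free pronilpotent of rank $\mm$; conversely, if $G$ is free pronilpotent of rank $\mm$, then each $G_p$ solves every minimal-kernel pro-$p$ problem with $\mm$ solutions, and the $\mm$ epimorphisms $G_p\to\mathbb{Z}/p$ arising from the case $D=1$ force $\omega_0(G_p)\ge\mm$, whence $\omega_0(G_p)=\mm$ and $G_p$ is free pro-$p$ of rank $\mm$. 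The step needing the most care is the bookkeeping of this final paragraph—checking that the solution-set bijection genuinely preserves both surjectivity and the cardinality $\mm$, and computing the local weight of the infinite product—rather than any isolated hard argument.
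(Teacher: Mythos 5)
Your proof is correct and follows essentially the same route as the paper's: both directions transport embedding problems between $G$ and its Sylow factors via Lemma \ref{Lemma pronilpotent maps}, lifting a $p$-group problem through the projection $G\to G_p$ and splitting a nilpotent problem into its prime components. Your refinements---reducing to a minimal normal kernel $K\cong \mathbb{Z}/p$ concentrated at a single prime via Proposition \ref{minimal normal}, the resulting bijection of solution sets, and the explicit verifications $\omega_0(G)=\mm$ and $\omega_0(G_p)=\mm$---are details the paper's proof leaves implicit, and they tighten rather than change the argument.
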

	\begin{proof}
		First, assume that $G$ is free pronilpotent of rank $\mm$. Notice that $\omega_o(G_p)\leq \omega_0(G)$. Look at the following embedding problem:
		\[
		\xymatrix@R=14pt{ & & &G_p \ar@{->>}[d]^{\varphi}& \\
			1 \ar[r] & K\ar[r] & C \ar[r]^{\alpha}&D \ar[r]&1\\
		}
		\]
		where $C$ is a finite $p$- group. We can lift $\alpha$ to an epimorphism from $G$ by the natural projection. Then it has $\mm$ solutions, which project through $G_p$. So, the problem has $\mm$ solutions. Hence, $G_p$ is a free pro- $p$ group of rank $\mm$.
		Now, assume that for every $p$, $G_p$ is a free pro- $p$ group of rank $\mm$. Notice that $\omega_0(G)=\mm$. Look at the following embedding problem
		\[
		\xymatrix@R=14pt{ & & &G \ar@{->>}[d]^{\varphi}& \\
			1 \ar[r] & K\ar[r] & C \ar[r]^{\alpha}&D \ar[r]&1\\
		}
		\]
		where $C$ is a nilpotent group. Denote $C\cong C_{p_1}\times \cdots C_{p_n}$. We can split the diagram into $n$ embedding problems of $p$ groups. By assumption, each one of the problems has $\mm$ solutions, and thus the given embedding problem has $\mm$ solutions. Hence, $G$ is a free pronilpotent group of rank $\mm$ 
	\end{proof}
	\begin{cor}\label{pronilpotent the successor case}
		By Theorem \ref{free pro p}, Proposition \ref{completiom_of_pronilpotent} and Proposition \ref{sylow subgroups of free pronilpotent groups} we conclude that the profinite completion of a free pronilpotent group is a free pronilpotent group.
	\end{cor}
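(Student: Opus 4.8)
The plan is to chain the three cited results together, the only genuine work being a bookkeeping check on ranks. First I would fix a free pronilpotent group $G$ of infinite rank $\mm$. By Proposition \ref{sylow subgroups of free pronilpotent groups} it decomposes as $G=\prod_p G_p$ where each $G_p$ is a free pro-$p$ group of rank $\mm$. Applying Proposition \ref{completiom_of_pronilpotent} then identifies the completion with the product of the completions of the Sylow factors, $\widehat{G}\cong \prod_p \widehat{G_p}$, thereby reducing the question about $\widehat{G}$ to independent questions about the individual $\widehat{G_p}$.

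For each prime $p$, Theorem \ref{free pro p} (in the case $\alpha=1$, so that $G_p{}_{,1}=\widehat{G_p}$) tells us that $\widehat{G_p}$ is again a free pro-$p$ group. At this point I would have $\widehat{G}\cong \prod_p \widehat{G_p}$ with every factor free pro-$p$, and it only remains to recognize a product of free pro-$p$ groups as free pronilpotent. This is exactly the reverse implication of Proposition \ref{sylow subgroups of free pronilpotent groups}, but that proposition requires the factors to share a \emph{common} rank; establishing this uniformity is the step I expect to need the most care. It follows from the earlier rank computation, which shows that a free pro-$\CC$ group of rank $\mm$ has profinite completion of rank $2^{2^{\mm}}$: specialized to $\CC$ the variety of finite $p$-groups it yields $\omega_0(\widehat{G_p})=2^{2^{\mm}}$, a cardinal independent of $p$. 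Concretely, $\omega_0(\widehat{G_p})$ counts the finite-index normal subgroups of $G_p$, which is at most $2^{2^{\mm}}$ since $|G_p|\le 2^{\mm}$ and at least $2^{2^{\mm}}$ by the ultrafilter construction of Lemma \ref{basic lemma}, so equality holds uniformly.

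Finally, with each $\widehat{G_p}$ a free pro-$p$ group of the single common rank $2^{2^{\mm}}$, I would invoke the reverse direction of Proposition \ref{sylow subgroups of free pronilpotent groups}: the product $\prod_p \widehat{G_p}$ is then a free pronilpotent group (of rank $2^{2^{\mm}}$). Combined with the isomorphism $\widehat{G}\cong \prod_p \widehat{G_p}$ from Proposition \ref{completiom_of_pronilpotent}, this shows that $\widehat{G}$ is a free pronilpotent group, completing the argument. The whole proof is thus a straightforward splice of Theorem \ref{free pro p}, Proposition \ref{completiom_of_pronilpotent} and Proposition \ref{sylow subgroups of free pronilpotent groups}, with the rank uniformity being the one point that genuinely has to be verified rather than merely cited.
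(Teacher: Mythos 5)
Your proof is correct and follows exactly the route the paper intends: the corollary is stated as the splice of Theorem \ref{free pro p}, Proposition \ref{completiom_of_pronilpotent} and Proposition \ref{sylow subgroups of free pronilpotent groups}, which is precisely your chain. Your explicit verification that $\omega_0(\widehat{G_p})=2^{2^{\mm}}$ uniformly in $p$ (so that the reverse direction of Proposition \ref{sylow subgroups of free pronilpotent groups} applies) is a point the paper leaves implicit, but it follows, as you note, from the paper's earlier rank computation for $\widehat{F_{\CC}(\mm)}$, so there is no genuine divergence.
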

\begin{thm}
	Let $G$ be a free pronilpotent group of infinite rank. Then for every ordinal $\alpha$, $G_{\alpha}$ is free pronilpotent.
\end{thm}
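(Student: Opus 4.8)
The plan is a transfinite induction on $\alpha$ carrying the strengthened hypothesis that $G_\alpha$ splits as a product over the primes of the higher completions of its pro-$p$ Sylow factors, in a way compatible with the transition maps of the tower. Concretely, by Proposition \ref{sylow subgroups of free pronilpotent groups} I write $G=\prod_p G_p$ with each $G_p$ a free pro-$p$ group of rank $\mm$; each $G_p$ is itself non-strongly complete by Lemma \ref{basic lemma}, so it has its own tower, whose $\alpha$-th term I denote $(G_p)_\alpha$. By Theorem \ref{free pro p} every $(G_p)_\alpha$ is a free pro-$p$ group, and since $\omega_0(G_p)=\omega_0(G)=\mm$ for all $p$, the rank formulas of \cite{BarOn+2021} give all the $(G_p)_\alpha$ a common rank $\mm_\alpha$ depending only on $\alpha$ and $\mm$. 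The inductive claim is that $G_\alpha\cong\prod_p(G_p)_\alpha$ with the tower's transition maps acting coordinatewise; granting this, Proposition \ref{sylow subgroups of free pronilpotent groups} immediately yields that $G_\alpha$ is free pronilpotent of rank $\mm_\alpha$. The base case $\alpha=0$ is the Sylow decomposition of $G$.

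For the successor step $\alpha=\beta+1$, the inductive decomposition makes $G_\beta\cong\prod_p(G_p)_\beta$ a product of pro-$p$ groups, hence pronilpotent, so Proposition \ref{completiom_of_pronilpotent} gives $G_{\beta+1}=\widehat{G_\beta}\cong\prod_p\widehat{(G_p)_\beta}=\prod_p(G_p)_{\beta+1}$; this is exactly the content of Corollary \ref{pronilpotent the successor case} together with its compatibility with the Sylow decomposition. Naturality of the completion map $\iota\colon G_\beta\to\widehat{G_\beta}$ ensures that it is the product of the completion maps $(G_p)_\beta\to(G_p)_{\beta+1}$, so the transition maps continue to act coordinatewise and the strengthened hypothesis is maintained.

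The limit case carries the real difficulty, because profinite completion commutes neither with the direct limit nor with the infinite product. Fix a limit ordinal $\alpha$, so $G_\alpha=\widehat{H_\alpha}$ with $H_\alpha=\dirlim_{\beta<\alpha}G_\beta$. Since the transition maps are injective, $H_\alpha$ is the increasing union $\bigcup_{\beta<\alpha}G_\beta$, and I first show that $\widehat{H_\alpha}$ is pronilpotent by verifying that every finite abstract quotient $H_\alpha\to F$ is nilpotent. The images of the $G_\beta$ in the finite group $F$ form an increasing chain of subgroups, so they equal $F$ for all $\beta$ beyond some $\beta_0<\alpha$; thus $F$ is an abstract quotient of the pronilpotent group $G_{\beta_0}$ and is therefore nilpotent by \cite[Corollary 4.2.4]{ribes2000profinite}. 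Consequently $G_\alpha=\widehat{H_\alpha}$ is pronilpotent and decomposes as $\prod_p K_p$, with $K_p$ its pro-$p$ Sylow subgroup.

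It remains to identify $K_p$ with $(G_p)_\alpha$. The coordinatewise projections $G_\beta\to(G_p)_\beta$ from the inductive hypothesis are compatible with transitions, so they induce $\pi_p\colon H_\alpha\to(H_p)_\alpha$, where $(H_p)_\alpha=\dirlim_{\beta<\alpha}(G_p)_\beta$ is the direct limit in the pro-$p$ tower of $G_p$. Given any finite $p$-group quotient $\psi\colon H_\alpha\to P$, its restriction to each $G_\beta=\prod_q(G_q)_\beta$ is an abstract homomorphism from a pronilpotent group to a finite $p$-group, which by Lemma \ref{Lemma pronilpotent maps} factors through the projection onto $(G_p)_\beta$; by compatibility these factorizations glue to a factorization of $\psi$ through $\pi_p$. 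Hence the finite $p$-group quotients of $H_\alpha$ are precisely the finite quotients of $(H_p)_\alpha$, giving $K_p\cong\widehat{(H_p)_\alpha}=(G_p)_\alpha$, which is free pro-$p$ of rank $\mm_\alpha$ by Theorem \ref{free pro p}. Proposition \ref{sylow subgroups of free pronilpotent groups} then shows $G_\alpha\cong\prod_p(G_p)_\alpha$ is free pronilpotent, closing the induction. I expect this limit step to be the main obstacle; the features that make it go through are the stabilization of maps to a fixed finite group along the union $\bigcup_{\beta<\alpha}G_\beta$ and Lemma \ref{Lemma pronilpotent maps}, which confines each $p$-primary quotient to the $p$-th coordinate and thereby reduces the problem to the pro-$p$ tower handled by Theorem \ref{free pro p}.
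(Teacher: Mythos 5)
Your proof is correct, but at the limit step it takes a genuinely different route from the paper's. The successor step coincides with the paper's (Proposition \ref{completiom_of_pronilpotent} and Corollary \ref{pronilpotent the successor case}), and your stabilization argument showing every finite quotient of $H_{\alpha}=\bigcup_{\beta<\alpha}G_{\beta}$ is nilpotent is exactly the paper's opening move at a limit ordinal. From there the paper proceeds via projectivity: it invokes Proposition \ref{projective in the chain} to get that $G_{\alpha}$ is projective, Corollary \ref{closed subgroup of projective} to conclude that each Sylow subgroup $(G_{\alpha})_p$ is free pro-$p$, and then handles the rank by a local-weight comparison, using that profinite completion preserves inclusions of closed subgroups to embed $(G_p)_{\alpha}\leq_c(G_{\alpha})_p$ and conclude $\omega_0((G_{\alpha})_p)=\omega_0(G_{\alpha})$. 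You instead carry the strengthened inductive hypothesis $G_{\alpha}\cong\prod_p(G_p)_{\alpha}$ and prove it directly at limits by a quotient-lattice analysis: Lemma \ref{Lemma pronilpotent maps} confines each finite $p$-quotient of $H_{\alpha}$ to the $p$-coordinate, the factorizations glue along the increasing union, and hence the $p$-Sylow subgroup $K_p$ of $\widehat{H_{\alpha}}$ is the pro-$p$ completion of $H_{\alpha}$, i.e.\ $\widehat{(H_p)_{\alpha}}=(G_p)_{\alpha}$, which is free of the right rank by Theorem \ref{free pro p} and the cardinality formulas of \cite{BarOn+2021}. What each approach buys: the paper's argument is shorter because it reuses the projectivity machinery and never needs to track the Sylow decomposition through the tower, but it only establishes a containment $(G_p)_{\alpha}\leq_c(G_{\alpha})_p$ plus a weight count; your argument avoids projectivity at the limit stage (it enters only through Theorem \ref{free pro p} for the pro-$p$ towers) and yields the sharper structural statement that the whole tower splits coordinatewise, $(G_{\alpha})_p\cong(G_p)_{\alpha}$ for every $\alpha$ and $p$, which makes the rank bookkeeping automatic. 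Two small points you should make explicit: the identification of $K_p$ with the inverse limit of the finite $p$-quotients of $H_{\alpha}$ uses the continuous analogue of Lemma \ref{Lemma pronilpotent maps} (same one-line order argument via \cite[Proposition 4.2.1]{ribes2000profinite}), and at the limit stage you must check that your isomorphism is again compatible with the maps $G_{\beta}\to G_{\alpha}$, so the coordinatewise hypothesis survives past the limit ordinal; both verifications are routine from your construction, since the glued factorizations are restrictions of $\psi$ and the completion map is natural.
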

\begin{proof}
First, by Lemma \ref{basic lemma} $G$ is not strongly complete. We will prove the theorem by transfinite induction.

$\alpha=0$: this is the assumption.

$\alpha=\beta+1$: By induction assumption, $G_{\beta}$ is free pronilpotent, so this is exactly Corollary \ref{pronilpotent the successor case}.

$\alpha$ is limit: First, we would like to prove that $G_{\alpha}$ is pronilpotent. $G_{\alpha}$ is defined to be the inverse limit of all the finite quotients of $H_{\alpha}$. Thus, we need to show that every finite quotient of $H_{\alpha}$ is nilpotent. But since $H_{\alpha}={\dirlim}_{\beta<\alpha}G_{\beta}$, every finite quotient of $H_{\alpha}$ is also a finite quotient of $G_{\gamma}$ for some $\gamma<\alpha$. By induction assumption $G_{\gamma}$ is pronilpotent, so by \cite[Corollary 4.2.4]{ribes2000profinite} every abstract finite quotient of $G_{\gamma}$ is nilpotent, and we are done. Now, one sees immediately that a free pro-$\CC$ group is projective. So, by Proposition \ref{projective in the chain} $G_{\alpha}$ is projective. By Corollary \ref{closed subgroup of projective}, a closed subgroup of a projective group is projective, so all the $p$-Sylow subgroups of $G_{\alpha}$ are projective pro-$p$ groups- i.e, free pro-$p$ groups. By \ref{sylow subgroups of free pronilpotent groups}, the only thing left to show is that $\operatorname{rank}((G_{\alpha})_p)=\omega_0(G_{\alpha})$. Since $(G_{\alpha})_p$ is a closed subgroup of $G_{\alpha}$, by \cite[Corollary 2.6.5]{ribes2000profinite} $ \operatorname{rank}((G_{\alpha})_p)=\omega_0((G_{\alpha})_p)\leq \omega_0(G_{\alpha})$. For the second direction, recall that by \cite[Corollary 4]{BarOn+2021}, if $H$ is a nonstrongly complete profinite group, then the local weight is determined only by $\omega_0(H)$. Since $\omega_0((G)_p)=\omega_0(G)$, we get that $ \omega_0((G)_p)_{\alpha})=\omega_0(G_{\alpha})$. By \cite[Lemma 1.2.5]{fried2006field} a profinite group $H$ induces the full profinite topology on every closed subgroup, meaning that every open subgroup of the closed subgroup contains the intersection of the closed subgroup with some open subgroup of $H$. By \cite[Lemma 3.2.6]{ribes2000profinite} it implies that the profinite completion preserves inclusion of closed subgroups. Then, by an easy application of transfinite induction, one gets that if $K\leq_cH$ then for every ordinal $\alpha$, $K_{\alpha}\leq_cH_{\alpha}$. In our case, $(G_p)_{\alpha}\leq_cG_{\alpha}$. Since $(G_p)_{\alpha}$ is pro-$p$ group (Lemma \ref{orders in the chain}), and $G_{\alpha}$ is a product of pro-$p$ groups, $(G_p)_{\alpha}\leq_c(G_{\alpha})_p$. Hence, $\omega_o(G_{\alpha})=\omega_0((G_p)_{\alpha})\leq\omega_0((G_{\alpha})_p)$. We are done. 
\end{proof}
	\subsection*{The pro-abelian case}
	\begin{prop}
		The profinite completion of a free abelian profinite group is free abelian profinite.
	\end{prop}
	\begin{proof}
		By \cite[Example 3.3.9 (c)]{ribes2000profinite} the free abelian profinite group of rank $\mm$ is the product of $\mm$ copies of $\mathbb{Z}$, and its $p$-Sylow groups are free pro-$p$ abelian groups. By Proposition \ref{completiom_of_pronilpotent} it's enough to prove the claim for free pro-$p$ abelian groups. Denote by $F_p=\prod_{\mm}\mathbb{Z}_p$ the free pro-$p$ abelian group, and let 
		\[
		\xymatrix@R=14pt{ & & &F_p \ar@{->>}[d]^{\varphi}& \\
			1 \ar[r] & K\ar[r] & C \ar[r]^{\alpha}&D \ar[r]&1\\
		}
		\]
		be an abstract embedding problem where $K$ is minimal normal. Then, $K\cong C_p$, and there are two options for $C$ and $D$.
		\begin{itemize}
			\item $C\cong K\times D$ with the natural projection.
			\item $D\cong C_{p^n}\times E$ and $C\cong C_{p^{n+1}}\times E$ with the natural projection. 
		\end{itemize}
		In case 1 we simply define $\psi=(\eta,\varphi)$ where $\eta$ is some abstract epimorphism from $F_p$ to $K$. Since the free pro-$p$ abelian group of rank $\mm$ projects on $\prod_{\mm}C_p$, there are $2^{2^{\mm}}$ such epimorphisms.
		In case 2, $\varphi=(\varphi_1,\varphi_2)$ where $\varphi_1:F_p\to C_{p^n}$ and $\varphi_2:F_p\to E$. Define $\psi=(\eta,\varphi_2)$ where $\eta:F_p\to C_{p^{n+1}}$ is an abstract solution to the reduced diagram
		\[
		\xymatrix@R=14pt{ & & &F_p \ar@{->>}[d]^{\varphi_1}& \\
			1 \ar[r] & K\ar[r] & C_{p^{n+1}} \ar[r]^{\alpha}&C_{p^n} \ar[r]&1\\
		}
		\]
		We need to show that there are $2^{2^{\mm}}$ options to such $\eta$.
		
		Notice that any abstract epimorphism from $\prod_{\mm}\mathbb{Z}_p$ to $C_{p^k}$ splits through $(\prod_{\mm}\mathbb{Z}_p)/p^k\cong \prod_{\mm}C_{p^k}$. Thus we have the following diagram:
		
		\[
		\xymatrix@R=14pt{  &F_p \ar@{->>}[d] \ar@{->>}[dl]  \\
			\prod_{\mm}C_{p^{n+1}} \ar@{->>}[r] & \prod_{\mm}C_{p^n} \ar@{->>}[d] \\
			C_{p^{n+1}} \ar@{->>}^{\alpha}[r]&C_{p^n} \\
		}
		\]
		Recall that $\prod_{\mm}C_{p^k}$ is a free module over $\mathbb{Z}/p^k\mathbb{Z}$ and so it is isomorphic to $\bigoplus_{2^{\mm}}\mathbb{Z}/p^k\mathbb{Z}$. So by defining a function on the basis which sends every element to a possible origin in $C_{p^{n+1}}$ of its image in $C_{p^n}$, in such a way that will cover all the origins of some element, we get a weak solution which covers the kernel, which is thus a solution. There are $2^{2^{\mm}}$ ways to do so. 
	\end{proof}
\begin{thm}
	Let $G$ be a free profinite abelian group. Then for every $\alpha$, $G_{\alpha}$ is free profinite abelian of the corresponding rank.
\end{thm}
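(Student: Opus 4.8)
The plan is to argue by transfinite induction on $\alpha$, running the same three-case scheme as in the pronilpotent theorem but replacing the appeal to profinite projectivity by a direct analysis of abstract embedding problems, since a free profinite abelian group of rank $\geq 2$ is \emph{not} projective as a profinite group (its pro-$p$ Sylow $\prod_{\mm}\mathbb{Z}_p$ is abelian, hence not a free pro-$p$ group), so Proposition \ref{projective in the chain} is not available to us here. The base case $\alpha=0$ is the hypothesis, and $G$ is not strongly complete by Lemma \ref{basic lemma}. For a successor $\alpha=\beta+1$, the inductive hypothesis makes $G_\beta$ free profinite abelian, and the preceding Proposition gives that $G_{\beta+1}=\widehat{G_\beta}$ is again free profinite abelian, of the rank $2^{2^{\omega_0(G_\beta)}}$ prescribed by \cite{BarOn+2021}.

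The whole difficulty is the limit case $\alpha=\lambda$, where $G_\lambda=\widehat{H_\lambda}$ with $H_\lambda=\dirlim_{\beta<\lambda}G_\beta$. First, $G_\lambda$ is pro-abelian: $H_\lambda$ is a directed union of the abelian groups $G_\beta$, hence abelian, so every finite quotient of $H_\lambda$ is abelian and $G_\lambda$ decomposes as the product of its pro-$p$ parts, each pro-$p$ abelian. I would then set $\mm'=\omega_0(G_\lambda)=2^{\sup_{\beta<\lambda}\omega_0(G_\beta)}$, the value supplied by \cite{BarOn+2021}, and invoke Proposition \ref{free_completion_of_abstract_group}: it suffices to show that every \emph{abstract} embedding problem for $H_\lambda$ with minimal normal kernel $K\cong C_p$ and finite abelian $C$ admits exactly $\mm'$ abstract solutions. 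Using the splitting $C=C^{(p)}\times C^{(p')}$ together with $K\subseteq C^{(p)}$, the $p'$-part of any solution is forced to equal the $p'$-part of $\varphi$, so the problem reduces to the pro-$p$ abelian situation, where by the analysis of the preceding Proposition it has one of two shapes (the split shape $C=C_p\times D$, and the shape $C=C_{p^{n+1}}\times E$, $D=C_{p^n}\times E$).

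For solvability I would copy the mechanism of Proposition \ref{projective in the chain}: for $\beta$ large enough $\varphi|_{G_\beta}$ is already onto $D$, the set $S(\beta)$ of abstract solutions over $G_\beta$ is nonempty (by the inductive hypothesis $\widehat{G_\beta}$ is free, so Proposition \ref{free_completion_of_abstract_group} solves the restricted problem over $G_\beta$), and $S(\beta)$ is a closed, hence profinite, subspace of $C^{G_\beta}$ on which the restriction maps $S(\beta)\to S(\gamma)$ are continuous; by \cite[Proposition 1.1.4]{ribes2000profinite} the inverse limit $\invlim_\beta S(\beta)$, whose elements are precisely the solutions over $H_\lambda$, is nonempty. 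To count the solutions, note that $K$ is central, so once one solution exists the full solution set is its coset by the epimorphisms in $\mathrm{Hom}(H_\lambda,K)=\mathrm{Hom}(H_\lambda,C_p)$, whence the number of solutions is $|\mathrm{Hom}(H_\lambda,C_p)|=2^{\dim_{\mathbb{F}_p}(H_\lambda/pH_\lambda)}$. Finally $H_\lambda/pH_\lambda=\dirlim_\beta (G_\beta)_p/p(G_\beta)_p$, and because $G_\beta$ is a continuous retract of $\widehat{G_\beta}=G_{\beta+1}$ the tower inclusions are split, so they stay injective mod $p$ and $\dim_{\mathbb{F}_p}(H_\lambda/pH_\lambda)=\sup_{\beta<\lambda}2^{\omega_0(G_\beta)}$; since $2^{\omega_0(G_\beta)}\le \omega_0(G_{\beta+1})\le \sup_{\gamma<\lambda}\omega_0(G_\gamma)$, this supremum equals $\sup_{\beta<\lambda}\omega_0(G_\beta)$, and the number of solutions is $2^{\sup_{\beta}\omega_0(G_\beta)}=\mm'$. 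Recombining the pro-$p$ parts (the abelian analogue of Proposition \ref{sylow subgroups of free pronilpotent groups}, contained in the preceding Proposition) then yields that $G_\lambda$ is free profinite abelian of rank $\mm'$.

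I expect the limit case to be the main obstacle, and within it the delicate point is matching the \emph{exact} cardinality of the solution set to $\omega_0(G_\lambda)$. One must simultaneously ensure that the inverse system of solution spaces does not collapse mod $p$ (which is exactly what the retraction $\widehat{G_\beta}\to G_\beta$ secures, keeping the transition maps on $H_\lambda/pH_\lambda$ injective) and that the resulting cardinal $2^{\sup_\beta 2^{\omega_0(G_\beta)}}$ agrees with the value $2^{\sup_\beta\omega_0(G_\beta)}$ of \cite{BarOn+2021}; the coincidence of the two suprema rests precisely on the super-exponential growth $\omega_0(G_{\beta+1})=2^{2^{\omega_0(G_\beta)}}$ along the tower, which keeps each $2^{\omega_0(G_\beta)}$ bounded by a later term $\omega_0(G_{\beta+1})$.
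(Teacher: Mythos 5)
Your route is essentially correct, but it is genuinely different from the paper's, and the reason you give for departing from it is a misreading. In this paper projectivity is always relative to the variety $\CC$ (in the definition preceding Corollary \ref{closed subgroup of projective}, the groups $C,D$ range over pro-$\CC$ groups), so a free profinite abelian group, being free pro-$\CC$ for $\CC$ the finite abelian groups, \emph{is} projective in the relevant sense even though, as you correctly note, it is not projective in the category of all profinite groups once its rank exceeds $1$. The paper's proof does exactly what you declared impossible: by Proposition \ref{projective in the chain} every $G_\alpha$ is pro-abelian projective, hence a closed subgroup of a free profinite abelian group, hence of the form $\prod_p\prod_{I_{p,\alpha}}\mathbb{Z}_p$; all that remains is the cardinal count $|I_{p,\alpha}|=\omega_0(G_\alpha)$, done at successors via \cite[Theorem 2]{BarOn+2021} (using Proposition \ref{completiom_of_pronilpotent}) and at limits by counting projections onto $C_p$ as in the proof of Proposition 10 of \cite{bar-on_2021}. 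Your alternative — verifying the criterion of Proposition \ref{free_completion_of_abstract_group} for $H_\lambda$ directly, with a compactness argument for existence and a torsor argument for the count — buys independence from the structure theory of projective pro-abelian groups, at the cost of redoing by hand the cardinality bookkeeping that the paper outsources; it is a legitimate and arguably more self-contained proof of the limit case.

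Two steps in your outline need repair, though both are fixable. First, your solution spaces $S(\beta)$ consist of \emph{surjective} lifts, and restriction to $G_\gamma$ does not preserve surjectivity, so the maps $S(\beta)\to S(\gamma)$ of your inverse system are not well defined. Run the compactness argument instead with the sets of \emph{lifts} (homomorphisms $\psi$ with $\alpha\circ\psi=\varphi|_{G_\beta}$, no surjectivity imposed), exactly as Proposition \ref{projective in the chain} does with weak solutions; then recover surjectivity afterwards. Here your ``coset by the epimorphisms'' claim is also imprecise: the set of lifts of $\varphi$ is a torsor under all of $\operatorname{Hom}(H_\lambda,K)$, and a lift $\psi$ fails to be surjective precisely when its image is a complement $T$ of $K$ over $D$, i.e.\ $\psi=(\alpha|_T)^{-1}\circ\varphi$ — finitely many exceptions — so once one lift exists, exactly $|\operatorname{Hom}(H_\lambda,K)|$ of the lifts are solutions, which is what your count needs. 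Second, the splitness of the tower maps is only justified by you at successor steps, where the universal property of $\widehat{G_\beta}$ applied to $\operatorname{id}:G_\beta\to G_\beta$ gives a retraction; for limit ordinals $\mu<\lambda$ you must construct compatible retractions $G_\mu\to G_\beta$ by transfinite recursion (at a limit $\mu$, the cone of retractions $G_\delta\to G_\beta$, $\beta\le\delta<\mu$, induces $H_\mu\to G_\beta$, which extends to $G_\mu=\widehat{H_\mu}$ because $G_\beta$ is profinite). With that in place, the maps $G_\beta/pG_\beta\to H_\lambda/pH_\lambda$ are indeed injective, $\dim_{\mathbb{F}_p}(H_\lambda/pH_\lambda)=\sup_\beta 2^{\omega_0(G_\beta)}=\sup_{\beta<\lambda}\omega_0(G_\beta)$, and your identification of the solution count with $2^{\sup_{\beta<\lambda}\omega_0(G_\beta)}=\omega_0(G_\lambda)$, via the super-exponential growth along the tower, is correct.
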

\begin{proof}
	By Proposition \ref{projective in the chain}, $G_{\alpha}$ is projective for every $\alpha$, so by Lemma \ref{closed subgroup of projective} it is isomorphic to a closed subgroup of a free profinite abelian group, and thus has the form $\prod_{p}(\prod_{I_{p,\alpha}}\mathbb{Z}_{p})$ where $p$ runs over the set of all primes, and $I_{\alpha,p}$ is some indexing set. We only left to show that $|I_{\alpha,p}|=\omega_0(G_{\alpha})$. We prove it by transfinite induction. Let $\beta$ be some ordinal and assume that the claim holds for all $\alpha<\beta$.
	
	By Proposition \ref{completiom_of_pronilpotent} $$\widehat{\prod_{p}(\prod_{I_{p,\alpha}}\mathbb{Z}_{p})}\cong \prod_{p}(\widehat{\prod_{I_{p,\alpha}}\mathbb{Z}_{p}})$$
	
	Thus, by \cite[Theorem 2]{BarOn+2021} if $\alpha=\beta+1$, $|I_{p,\alpha}|=2^{2^{|I_{p,\alpha}|}}=2^{2^{\omega_0(G_{\alpha})}}=\omega_0(G_{\beta}).$
	
	If $\alpha$ is limit we want to show that for every $p$, the $p$-part of $G_{\alpha}$ has $\omega_0(G_{\beta})$ projections on $C_p$. Indeed, since $G_0$ is free, it projects onto $\prod_{\omega_0(G_0)}C_p$, so we can apply the proof of Proposition 10 in \cite{bar-on_2021} with $S=C_p$.   
\end{proof}

	\section*{Partial results for the general case}
	We give some results which hold in general for every variety $\CC$, and in particular for the variety of all finite groups. 
		\begin{thm}\label{MainProp}
		Let $G$ be a free pro-$\CC$ group of rank $\mathbf{m}$. Then every embedding problem
		\[
		\xymatrix@R=14pt{ & & &G \ar@{->>}[d]^{\varphi}& \\
			1 \ar[r] & K\ar[r] & C \ar[r]^{\alpha}&D \ar[r]&1\\
		}
		\]
		with $C\ne 1$ is finite in $\CC$ and $\varphi$ is continuous, has $2^{2^{\mathbf{m}}}$ \textit{abstract} solutions.
	\end{thm}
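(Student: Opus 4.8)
The plan is to produce the solutions directly, combining the freeness of $G$ with an ultralimit construction in the spirit of Lemma \ref{basic lemma}, rather than going through the minimal-normal refinement. Throughout write $\varphi\colon G\to D$ for the given continuous epimorphism, and assume $K\neq 1$ (if $K=1$ then $\alpha$ is an isomorphism and the lift is unique, so this case is understood to be excluded). First I would dispose of the upper bound: since $G$ is free pro-$\CC$ of rank $\mm$ it embeds into a product of $\mm$ finite groups, whence $|G|\le 2^{\mm}$, and as $C$ is finite this gives $|\operatorname{Hom}(G,C)|\le |C|^{2^{\mm}}=2^{2^{\mm}}$. Thus it suffices to exhibit $2^{2^{\mm}}$ distinct abstract solutions.

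For the lower bound I would introduce the fibre power $P=\{(c_i)_{i\in\mm}\in\prod_{\mm}C : \alpha(c_i)\text{ is independent of }i\}$, a closed subgroup of $\prod_{\mm}C$ and hence a pro-$\CC$ group, equipped with the epimorphism $\pi\colon P\to D$ sending $(c_i)$ to the common value $\alpha(c_i)$, whose kernel $\prod_{\mm}K$ is nontrivial. Since $\omega_0(P)\le\mm$ and $\omega_0(D)<\mm$, the freeness of $G$ in its embedding-problem form (cf. \cite[Proposition 3.5.9]{ribes2000profinite}) yields a continuous epimorphism $q\colon G\to P$ with $\pi q=\varphi$. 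For each ultrafilter $\FF$ on $\mm$ let $u_{\FF}\colon P\to C$ be the ultralimit map $(c_i)\mapsto\lim_{\FF}c_i$, which is a well-defined abstract homomorphism because $C$ is finite, and set $\psi_{\FF}=u_{\FF}\circ q$.

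Then I would verify the three defining properties. Lifting: $\alpha\circ u_{\FF}=\pi$, because the ultralimit of the constant sequence $\alpha(c_i)$ is that constant, so $\alpha\psi_{\FF}=\pi q=\varphi$. Surjectivity: every $c\in C$ is the ultralimit of the constant tuple $(c,c,\dots)\in P$, so $u_{\FF}$, and therefore $\psi_{\FF}$, is onto. Distinctness: given $\FF\neq\FF'$, choose $A\in\FF\setminus\FF'$ and two distinct elements $c,c'$ of a single fibre $\alpha^{-1}(d)$ (possible since $K\neq1$); the tuple equal to $c$ on $A$ and to $c'$ off $A$ lies in $P$ and is sent to $c$ by $u_{\FF}$ and to $c'$ by $u_{\FF'}$, so $u_{\FF}\neq u_{\FF'}$, and as $q$ is surjective, $\psi_{\FF}\neq\psi_{\FF'}$. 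Since there are $2^{2^{\mm}}$ ultrafilters on $\mm$ (\cite[Section 111]{steen1978counterexamples}), this gives $2^{2^{\mm}}$ distinct abstract solutions, matching the upper bound.

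The step I expect to demand the most care is the construction of $q$: one must confirm that $P$ has local weight at most $\mm$ and that its kernel over $D$ is nontrivial, and then invoke the infinite-rank embedding-problem characterization of free pro-$\CC$ groups to solve $\pi q=\varphi$ with $q$ \emph{surjective}; everything after that is formal. I note that this route never uses the refinement to minimal-normal kernels of Proposition \ref{minimal normal}. An alternative closer to that hint would be to reduce to $K$ minimal normal (anchoring on one continuous solution of the quotient problem supplied by freeness) and then perturb a fixed continuous solution $\psi_0$ by the abstract homomorphisms $\ker\varphi\to K$ arising from ultrafilters, using the minimality of $K$ to force the perturbed maps to remain surjective; the fibre-power construction is preferable because it makes surjectivity automatic and treats all $K$ uniformly.
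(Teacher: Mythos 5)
Your proposal is correct, but it takes a genuinely different route from the paper's. The paper first reduces to $K$ a nontrivial minimal normal subgroup via Proposition \ref{minimal normal}, anchors on the $\mm$ distinct \emph{continuous} solutions $\psi_i$ supplied by freeness, extracts from their kernels $H_i$ (via the Zorn's-lemma subdirect-product Lemma \ref{homomorphism to the product is onto}, modeled on \cite[Lemma 8.2.2]{ribes2000profinite}) a subfamily $\mathcal{N}$ of cardinality $\mm$ with $U/M\cong\prod_{H_i\in\mathcal{N}}U/H_i$, and then pulls back ultrafilter subgroups of this product, verifying by hand that each pullback $H$ is normal in $G$, that $G/H\cong C$, and that the coset projection solves the problem. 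You instead solve a single \emph{infinite} embedding problem: mapping $G$ onto the fibre power $P=\{(c_i)\in\prod_{\mm}C:\alpha(c_i)\text{ constant}\}$ over $D$ via \cite[Proposition 3.5.9]{ribes2000profinite} --- your checks are right: $P$ is closed in $\prod_{\mm}C$, so pro-$\CC$ with $\omega_0(P)\le\mm$, while $\omega_0(D)<\mm$ since $D$ is finite, and that proposition yields a \emph{surjective} continuous lift $q$ --- and then compose with the abstract ultralimit quotients $u_{\FF}$. This makes surjectivity and the identity $\alpha\psi_{\FF}=\varphi$ formal, treats arbitrary $K\ne 1$ uniformly with no chief-series induction (you correctly observe that the minimal-normal refinement is never needed on your route), and your distinctness argument --- a tuple lying in a single fibre of $\alpha$, constant on some $A\in\FF\setminus\FF'$ and constant off it, separated because $q$ is onto --- is sound. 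The trade-off: the paper's argument uses only the finite embedding-problem characterization of freeness plus elementary lemmas, and produces the solutions concretely as projections $G\to G/H$ with explicitly located kernels (structure reused in the Remark that follows, on closures of kernels), whereas you invoke the stronger infinite-weight characterization with an infinite group $A=P$; conversely, your construction is shorter and eliminates the two delicate verifications ($H\unlhd G$ and $G/H\cong C$). Both proofs implicitly require $K\ne 1$ (with $K=1$ the lift is unique, so the stated count fails; the paper's proof likewise immediately assumes $K$ nontrivial minimal normal), so your explicit exclusion matches the paper's intent, and your upper bound $|\operatorname{Hom}(G,C)|\le 2^{2^{\mm}}$ agrees with the cardinality count the paper records elsewhere.
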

	Before proving it, we need the following lemma:
	\begin{lem}\label{homomorphism to the product is onto}
		Let $G$ be a profinite group, $U\unlhd_oG$ and $\mathcal{M}$ a set of normal subgroups $H\unlhd_oG$, s.t, for all $H\in \mathcal{M}$, $H\subseteq U$, and $U/H$ is a nontrivial minimal normal subgroup in $G/H$. Denote $M=\bigcap_{H\in \mathcal{M}}H$. Then, there is a subset $\mathcal{N}\subseteq \mathcal{M}$ such that the natural homomorphism $$\varphi_{\mathcal{N}}:U\to \prod_{H\in \mathcal{N}}U/H$$ is an epimorphism, and $M=\ker(\varphi_{\mathcal{N}}).$
	\end{lem}
	\begin{proof}
		The proof is similar to this of \cite{ribes2000profinite} Lemma 8.2.2.
		
		Let $\Omega$ denote the family of subsets $\mathcal{L}\subseteq \mathcal{M}$
		for which $$\varphi_{\mathcal{L}}:U\to \prod_{H\in \mathcal{L}}U/H$$ is an epimorphism. $\Omega$ is not empty, since it contains all the singletons.	
		Since $\prod_{H\in \mathcal{L}}U/H$ is an inverse limit of direct products over finite sets, one deduces that $\mathcal{L}\in \Omega$ if and only if $\varphi_{\mathcal{F}}$ is an epimorphism for each of its finite subsets $\mathcal{F}$, i.e, $\mathcal{F}\in \Omega$ for each of its finite subsets $\mathcal{F}$. Therefore, $\Omega$, ordered by inclusion, is an inductive set.
		Hence, by Zorn's lemma, there exists a maximal subset $\mathcal{N}$ in $\Omega$. To finish the proof it suffices to show that $M=\ker{\varphi(\mathcal{N})}$. Obviously, $\ker(\varphi_{\mathcal{N}})=\bigcap_{H\in \mathcal{N}}H$. Denote this subgroup by $N$. We need to show that $N=M$. It is obvious that $M\leq N$. If $M<N$, then there would exist some $K\in \mathcal{M}$ with $K\cap N<N$.
		
		Notice that since $K\in \mathcal{M}$, there is no normal subgroup $A\unlhd_oG$ that satisfies $K<A<U$. Since $N$ is the intersection of all normal subgroups of $G$ that are contained in $U$, it is also a normal subgroup of $G$ that is contained in $U$. Thus, $K\leq KN\leq U$ is a normal subgroup of $G$. If $KN=K$ then $K\cap N=N$, a contradiction. So, $KN=U$. Thus, $U/(K\cap N)\cong U/K\times U/N$. This implies that $\mathcal{N}\cup \{K\}\in \Omega$, contrary to the maximality of $\mathcal{N}$. Thus, $N=M$
	\end{proof}
	\begin{proof}[Proof of Theorem \ref{MainProp}]
		By Proposition \ref{minimal normal}, we may assume that $K$ is a nontrivial minimal normal subgroup of $C$. Let $U=\ker(\varphi)$. Since $D$ is finite and $\varphi$ is continuous, $U$ is an open normal subgroup of $G$. Since $G$ is a free pro-$\CC$ group, this embedding problem has $\mathbf{m}$ different (continuous) solutions, $\psi_i:G\to C$. Denote $H_i=\ker(\psi_i)$, and let $\mathcal{M}=\{H_i\}$. Obviously, for all $i$, $H_i\unlhd_oG$, and $H_i\subseteq U$. Moreover, by the assumption that $K$ is a normal minimal subgroup of $C$, we get that $U/H_i$ is a minimal normal subgroup of $G/H_i$. $\mathcal{M}$ satisfies the conditions of Lemma \ref{homomorphism to the product is onto}, thus there is a subset $\mathcal{N}\subseteq \mathcal{M}$ s.t $U/M\cong \prod_{H_i\in \mathcal{N}}U/H_i$ where $M=\bigcap_{H_i\in \mathcal{M}}H_i$. By computation of local weight we get that $|\mathcal{N}|=\mathbf{m}$. Explanation: On one hand, $\omega_0(\prod_I G_i)$ where all $G_i$ are finite group, equals $|I|$. On the other hand, $\omega_0(\prod_{H_i\in \mathcal{N}}U/H_i)\leq \omega_0(U)=\omega_0(G)=\mm$. In addition, for every $H_i\in \mathcal{M}$ $H_i/{\bigcap \mathcal{M}}$ is a different open subgroup of $\prod_{H_i\in \mathcal{N}}U/H_i$. So, $\omega_0(\prod_{H_i\in \mathcal{N}}U/H_i)\geq |\mathcal{M}|=\mm$. Thus, $\omega_0(\prod_{H_i\in \mathcal{N}}U/H_i)=\mm$.
		
		Let $\mathcal{F}$ be an ultrafilter on $\mathcal{N}$, and denote by $U_{\mathcal{F}}$ the subgroup it defines. It is known that $(\prod_{H_i\in \mathcal{N}}U/H_i)/U_{\mathcal{F}}\cong K$. Since $\varphi_{\mathcal{N}}$ is onto we get that $H=\varphi_{\mathcal{N}}^{-1}(U_{\mathcal{F}})$ is a normal subgroup of $U$, such that $U/H\cong K$.
		
		\begin{claim}
			$H\unlhd G$
		\end{claim}
		\begin{proof}
			Look at the following map: $$\varphi_{\mathcal{N}}:G\to \prod_{H_i\in \mathcal{N}}G/H_i$$ which is not necessarily onto. Denote by $G_{\mathcal{F}}$ the subgroup that the filter $\mathcal{F}$ creates in $\prod_{H_i\in \mathcal{N}}G/H_i$. Since the map from $U$ to $\prod_{H_i\in \mathcal{N}}U/H_i$ is onto, we get that $U_{\mathcal{F}}=\varphi_{\mathcal{N}}(U)\cap G_{\mathcal{F}}.$ In addition $U=\varphi_{\mathcal{N}}^{-1}(\prod_{H_i\in \mathcal{N}}G/H_i) $. Hence, $$H={\varphi_{\mathcal{N}}}^{-1}(U_{\mathcal{F}})={\varphi_{\mathcal{N}}}^{-1}(G_{\mathcal{F}}\cap \varphi_{\mathcal{N}}(U))={\varphi_{\mathcal{N}}}^{-1}(G_{\mathcal{F}})\cap U$$ the intersection of two normal subgroups.
		\end{proof}
		\begin{claim}
			$G/H\cong C$.
		\end{claim}
		\begin{proof}
			First recall that $(\prod_{H_i\in \mathcal{N}}U/H_i)/U_{\mathcal{_F}}\cong K$, and $(\prod_{H_i\in \mathcal{N}}G/H_i)/G_{\mathcal{F}}\cong C$. Since the map from $U$ is onto we get that $U/H\cong K$, so $|G/H|=|C|.$
			Assume that $|K|=t$ and $|D|=s$. Since the map from $U$ to the product is onto, there are elements $\{u_1,...,u_t\}$ which are representatives for $U/H_i$ for all $H_i\in \mathcal{N}$. Take a set of representatives $g_1,...,g_s$ of $U$ in $G$. Then $\{g_iu_j\}$ is a set of representatives of $H_i$ in $G$, for all $H_i\in \mathcal{N}$.
			In the isomorphism $(\prod_{H_i\in \mathcal{N}}G/H_i)/G_{\mathcal{F}}\cong C$, a set of representatives is the fixed tuples $\{(c)\}_c\in C$ when identifying $G/H_i$ with $C$ for all $H_i\in \mathcal{N}$. Let us look at the image of $g_1u_1$ in the product. It is equivalent to some fixed tuple, say $(c_1)$. It means that there is a subset $\mathcal{A}_1$ of $\mathcal{N}$ that lays in $\mathcal{F}$, such that for each subgroup in $H_i\in\mathcal{A}_1$ $\varphi_{H_i}(g_1u_1)=c_1$. Now look at the image of $(g_1u_2)$. For every $H_i\in \mathcal{N}$, $\varphi_{H_i}(g_1u_1)\ne \varphi_{H_i}(g_1u_2)$, so in the groups that in $\mathcal{A}_1$, $\varphi_{H_i}(g_1u_2)\ne c_1$. We can make a finite partition of $\mathcal{A}_1$ according to the image of $g_1u_2$. Since $\mathcal{F}$ is an ultrafilter, one of the subsets in the partition belongs to $\mathcal{F}$. Call it $\mathcal{A}_2$. So $\varphi_{\mathcal{N}}(g_1u_2)$ equivalent to the fixed tuple $(c_2)$, where $c_2\ne c_1$. We continue in this way, and since there are $ts=|K||D|=|C|$ representatives we get that the composed map $$G\to \prod_{H_i\in \mathcal{N}}G/H_i\to (\prod_{H_i\in \mathcal{N}}G/H_i)/G_{\mathcal{F}}\cong C$$ is onto. So, $G/{\varphi_{\mathcal{N}}}^{-1}(G_{\mathcal{F}})\cong C$. But, $H={\varphi_{\mathcal{N}}}^{-1}(G_{\mathcal{F}})\cap U$, so there is a natural epimorphism $G/H\to G/{\varphi_{\mathcal{N}}}^{-1}(G_{\mathcal{F}})\cong C$. Since $|G/H|=|C|$ we get that $G/H\cong C$.
		\end{proof}
		\begin{claim}
			The natural projection $\varphi_H:G\to G/H$ is a solution.
		\end{claim}
		\begin{proof}
			We left to show that $\varphi=\alpha\circ\varphi_H$. Well, let $g\in G$. By definition of $H$, there exist some $i$ s.t $\varphi_H(g)=\varphi_{H_i}(g)$. By definition of the $H_i$'s we know that $\varphi_{H_i}$ is a solution, so $\alpha(\varphi_{H_i}(g))=\varphi(g)$.
		\end{proof}
		Since there are $2^{2^{\mathbf{m}}}$ different ultrafilters on $\mathcal{N}$, and each one of them defines a different subgroup $H$, we get $2^{2^{\mathbf{m}}}$ different abstract solutions to the embedding problem.
	\end{proof}
	\begin{rem}
	Let $G$ be a group with $\omega_0(G)=\mathbf{m}$, and $H\unlhd_f G$ and suppose the diagram
	\[
	\xymatrix@R=14pt{ & & &G \ar@{->>}[d]^{\varphi}& \\
		1 \ar[r] & K\ar[r] & C \ar[r]^{\alpha}&G/H \ar[r]&1\\
	}
	\]
	where $C$ is finite and $K$ is a minimal normal subgroup, has $2^{2^{\mathbf{m}}}$ solutions $\beta_i:G\to C$. Then it has $2^{2^{\mathbf{m}}}$ solutions which satisfy $\overline{\ker \beta_i}=\overline{H}$.	
\end{rem}
\begin{proof}
	Denote these kernels by $T_i$. Obviously. $T_i\leq H$. Notice that since $T_i\unlhd G$, then $\overline{T_i}\unlhd G$. Moreover, since $T_i$ has a finite index, then so is $\overline{T_i}$. Thus, $\overline{T_i}$ is open. Obviously, $\overline{T_i}\leq \overline{H}$. Assume $\overline{T_i}\ne \overline{H}$. Then $H$ can't be contained in $\overline{T_i}$ So, $\overline{T_i}\cap H<H.$ But $T\leq \overline{T_i}\cap H$. Since $K$ is minimal normal subgroup, and $\overline{T_i}\cap H\unlhd G$, we get that $\overline{T_i}\cap H=T_i$. Hence, two solutions with the same closure which is not $\bar{H}$ must be equal. Since G admits only $\mathbf{m}$ different open normal subgroups, we get that there are $2^{2^{\mathbf{m}}}$ such $T_i$ for which $\bar{T_i}=\bar{H}$.
\end{proof}
\begin{rem}
	Let $F_{\CC}(\mathbf{m})$ be a free pro-$\CC$ group of rank $\mathbf{m}$. Assume that $\widehat{F_{\CC}(\mathbf{m})}$ is a free profinite group of rank $2^{2^{\mathbf{m}}}$, and $X$ is some basis for $\widehat{F_{\CC}(\mathbf{m})}$. Then, $X\cap F_{\CC}(\mathbf{m})$ is not a basis for $F_{\CC}(\mathbf{m})$.
\end{rem}
\begin{proof}
	Let $X_0$ be a basis for $F_{\CC}(\mathbf{m})$. Look at the group $G=\prod_{i=1}^{\infty}S$ where S is some finite group in $\CC$. Let $s\in S$ be a nontrivial element. Consider the set $A$, consisting of all the tuples $(a_i)$ of the form $$a_i=
	\begin{cases}
	e & i<j \\
	x& i=j\\
	s& i>j 
	\end{cases}$$ for some $j\in \mathbb{N}$ and $x\in S$.  One easily checks that this is a set of generators converging to 1 for $G$, So we can define an epimorphism $\varphi:F_{\CC}(\mathbf{m})\to G$ by sending a subset of cardinality $\aleph_0$ of $X_0$ bijectively to $A$, and the rest of the elements to 1. Let $\mathcal{F}$ be an ultrafilter on $I$, and let $H_\mathcal{F}$ be the subgroup it defines. Then $\varphi^{-1}(H_\mathcal{F})$ is a subgroup of finite index that doesn't contain an infinite subset of $X_0$. Its closure in $\widehat{F_{\CC}(\mathbf{m})}$ is an open subgroup that doesn't contain an infinite subset of $X_0$, since for every subgroup $K\leq _f F(\mathbf{m})$, $\overline{K}\cap F(\mathbf{m})=K$. So, $X_0$ can't be contained in any set converging to 1.
\end{proof}
	\section*{Abstract homomorphisms}
In this section we give two results regarding abstract homomorphisms from a free pro-$\CC$ group to itself.
	\begin{prop}
	Let $F$ be a free anabelian profinite group. I.e, a free pro-$\CC$ group of the class (which is not a variety) of all finite groups which have no abelian composition factors. Then any abstract automorphism of $F$ is continuous.
\end{prop}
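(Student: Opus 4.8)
The plan is to deduce continuity from the single assertion that \emph{every} abstract automorphism carries open subgroups to open subgroups: applying this to $\sigma^{-1}$ shows $\sigma^{-1}(U)$ is open for each $U\unlhd_o F$, which is exactly continuity of $\sigma$. The easy half is to note that for $U\unlhd_o F$ the image $\sigma(U)$ is a finite-index normal subgroup and that $\sigma$ induces an isomorphism $F/U\xrightarrow{\sim}F/\sigma(U)$, so $F/\sigma(U)\cong F/U\in\CC$. Hence the whole content is whether $\sigma(U)$ is \emph{closed}; a priori it could be one of the dense finite-index subgroups produced by the ultrafilter construction of Lemma~\ref{basic lemma}. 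Thus the theorem is equivalent to: no abstract automorphism of $F$ sends a closed finite-index normal subgroup to a dense one --- equivalently, the pro-$\CC$ topology on the abstract group $F$ is the unique one making it free pro-$\CC$ of rank $\mm$, up to $\operatorname{Aut}(F)$.

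I would first try to localize to simple quotients. Filtering $F/U\in\CC$ by a chief series and using that open subgroups of $F$ are themselves free pro-$\CC$ of rank $\mm$, one can attempt an inductive descent, passing to the preimage of a simple quotient and arguing inside that open subgroup. A warning is in order: anabelian groups need not be residually simple (the only simple quotient of $A_5\wr A_5$ is $A_5$, while its socle $A_5^5$ is invisible to simple quotients), so this descent cannot be a naive intersection of maximal normal subgroups and must be arranged to track the socle layers.

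The heart of the argument, and the only place the hypothesis of no abelian composition factors can enter, is to rule out that a discontinuous surjection arises by precomposing a continuous one with an automorphism. Suppose $\pi\colon F\to S$ is continuous ($S$ simple nonabelian) while $\psi=\pi\circ\sigma$ has dense kernel. A Goursat analysis forces $(\pi,\psi)$ to surject onto $S\times S$ (the diagonal case would give $\ker\psi=\ker\pi$, hence open). Using that $F$ admits continuous surjections onto $\prod_{\mm}S$, one produces many mutually independent such pairs and tries to assemble from them a finite abstract quotient of $F$ carrying a nontrivial abelian chief factor --- for instance by forcing the discontinuous ``drift'' into a central extension $\widetilde S\to S$ by the Schur multiplier. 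Since every group in $\CC$ is perfect with no abelian composition factors, such a quotient is impossible. The clean intermediate statement I would aim for is that $F$ cannot abstractly solve an embedding problem with nontrivial abelian kernel --- equivalently, every finite abstract quotient of $F$ again lies in $\CC$ --- a cohomological rigidity that isolates the role of anabelianity.

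The step I expect to be the main obstacle is the bridge from such rigidity to the automorphism-invariance of closedness. The ultrafilter examples show that, even when all quotients are anabelian, $F$ has dense finite-index subgroups with simple quotient, so the isomorphism type of the quotient never detects openness; one genuinely needs an $\operatorname{Aut}(F)$-invariant separation of closed from dense finite-index subgroups. Getting the Goursat-to-central-extension construction to actually manufacture an abelian chief factor out of a hypothetical discontinuity --- rather than merely another anabelian quotient compatible with a dense kernel --- is the delicate point, and is where I would expect the real work to lie, likely drawing on Ore's theorem and bounds for commutator width in finite anabelian groups.
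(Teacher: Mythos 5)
There is a genuine gap, and it sits exactly where you predict: your proposal never establishes the base case, and the mechanism you sketch for it provably cannot work as stated. You reduce everything to ruling out that $\psi=\pi\circ\sigma$ is discontinuous, where $\pi\colon F\to S$ is a continuous epimorphism onto a nonabelian simple group and $\sigma$ is an abstract automorphism, and you aim to do this by manufacturing a finite abstract quotient of $F$ with an abelian chief factor, i.e.\ by proving that every finite abstract quotient of $F$ lies in $\CC$. But that intermediate statement, even if true, certifies nothing about continuity of a single map: $F$ genuinely admits discontinuous epimorphisms onto $S$ (the ultrafilter kernels of Lemma~\ref{basic lemma} are dense of finite index with quotient $S\in\CC$), so no forbidden quotient of $F$ can ever distinguish $\ker\psi$ dense from $\ker\psi$ open --- as you yourself half-concede. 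Likewise, Goursat forcing $(\pi,\psi)$ onto $S\times S$ is no contradiction, since $F$ surjects continuously onto $\prod_{\mm}S$. Any argument of this shape looks at one finite quotient at a time and therefore cannot use the only available hypothesis, namely that $\sigma$ is a \emph{global} automorphism; the ``$\operatorname{Aut}(F)$-invariant separation of closed from dense finite-index subgroups'' you ask for does not exist subgroup-by-subgroup.

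The paper's missing ingredient is to separate them all at once: for a nonabelian simple $A$, let $M$ be the intersection of \emph{all} normal subgroups of $F$ with quotient isomorphic to $A$. This $M$ is invariant under every abstract automorphism, and $F/M\cong\prod A$ is a Cartesian power of $A$ by \cite[Lemma 8.2.2]{ribes2000profinite}; the anabelian hypothesis then enters through a known rigidity theorem (\cite[Theorem 2.4]{kiehlmann2015abelian}): every abstract automorphism of such a Cartesian power of a finite nonabelian simple group is continuous. Hence the induced $\bar\sigma$ on $F/M$ is continuous, $\pi_M\circ\sigma=\bar\sigma\circ\pi_M$ is continuous, and the base case $l(F/U)=1$ follows --- the discontinuity is killed not quotient-by-quotient but on the rigid characteristic quotient $\prod A$. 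Your outer skeleton (induct along a chief/composition series, descend into the open preimage) does match the paper's inductive step, which picks an open normal $H\supseteq U$ of smaller composition colength, uses that $H$ and $\varphi^{-1}(H)$ are again free pro-$\CC$ of rank $\mm$ (so $H\cong F$, giving a continuous isomorphism $\psi\colon H\to\varphi^{-1}(H)$), and applies the induction hypothesis to the automorphism $\varphi\circ\psi$ of $H$; in particular your worry about non-residual-simplicity and socle tracking dissolves, since the induction handles one simple layer at a time. But without the pair (characteristic subgroup $M$, continuity of abstract automorphisms of $\prod A$), the heart of the proof is absent, and the cohomological route via Schur multipliers and commutator width that you propose in its place is aimed at a statement that cannot close the argument.
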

\begin{proof}
	Let $\varphi:F\to F$ be an abstract automorphism. We need to show that for every $U\leq_oF$ $\varphi^{-1}(U)$ is open. It is enough to show this for every open normal subgroup. Indeed, if $U\leq_oG$, then $U$ contains some normal open subgroup $N$. Thus $\varphi^{-1}(N)\subseteq \varphi^{-1}(U)$. I.e, $\varphi^{-1}(U)$ contains an open subgroup and thus it is open.
	We will prove it by induction on $l(G/U)$, the length of a composition series for $G/U$.
	First case: $l(G/U)=1$ I.e, $G/U\cong A$ is simple. By assumption, it is simple nonabelian. By \cite[Lemma 8.2.2]{ribes2000profinite} $F/M\cong \prod A$ where $M$ is the intersection of all normal subgroups of $F$ with intersection isomorphic to $A$. $\varphi$ induces an automorphism $\bar{\varphi}:F/M\to F/M$. By \cite[Theorem 2.4]{kiehlmann2015abelian}  every such automorphism is continuous. Let $\pi:F\to F/M$ denotes the natural projection. Then, $\phi\circ\varphi=\bar{\varphi}\circ \pi$. So, $\phi\circ\varphi$ is continuous. Thus, $(\phi\circ\varphi)^-1(U/M)=\varphi^-1(U)$ is open.
	Now, assume $l(U)=n+1$. There exists some open normal subgroup $H$ such that $U/N$ is simple, and $l(H)=n$. By induction assumption, $\varphi^{-1}(H)$ is open. Every open subgroup of $F$ is isomorphic to $F$, so there is a continuous isomorphism $\psi:H\to \varphi^{-1}(H)$. $\varphi\circ \psi:H\to H$ is an automorphism, so again by induction assumption $(\varphi\circ \psi)^{-1}(U)$ is open. Since $\psi$ is a homeomorphism we get that $\varphi^{-1}(U)$ is open.    
\end{proof}
\begin{prop}\label{non continuou inner epimorphism}
	Let $F_{\CC}(\mm)$ be a free pro-$\CC$ group of infinite rank $\mm$. Then $F_{\CC}(\mm)$ admits a noncontinuous epimorphism $F_{\CC}(\mm)\to F_{\CC}(\mm)$.
\end{prop}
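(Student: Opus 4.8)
The plan is to exploit the rigid description of \emph{abstract} endomorphisms of $F:=F_{\CC}(\mm)$ through its finite continuous quotients. Writing $F={\invlim}_{U\unlhd_o F}F/U$, an abstract homomorphism $\varphi:F\to F$ is exactly the datum of a compatible family $(\bar\varphi_U)_{U\unlhd_o F}$ of abstract homomorphisms $\bar\varphi_U:F\to F/U$ into the finite groups $F/U$, compatibility meaning $\bar\varphi_V=p_{UV}\circ\bar\varphi_U$ whenever $U\le V$. In this language $\varphi$ is continuous if and only if every $\bar\varphi_U$ has \emph{open} kernel, and $\varphi$ has dense image if and only if every $\bar\varphi_U$ is onto. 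Theorem \ref{MainProp} supplies $2^{2^{\mm}}$ abstract epimorphisms onto each finite continuous quotient, the vast majority of which have non-open (dense, finite index) kernel; this is the reservoir from which the discontinuity will be drawn. The whole problem thus becomes: assemble such abstract epimorphisms into one compatible family whose associated endomorphism is at once noncontinuous and genuinely onto $F$.

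To pin down surjectivity I would split a basis $X$ as $X=A\sqcup\{b_n:n\in\mathbb{N}\}\sqcup B$ with $|A|=|B|=\mm$, giving the free pro-$\CC$ product decomposition of $F$ corresponding to this partition, and fix a continuous isomorphism $\theta:F\to\overline{\langle A\rangle}$. I would arrange the family $(\bar\varphi_U)$ so that $\varphi$ restricts on the factor $\overline{\langle A\rangle}$ to $\theta^{-1}$, a continuous isomorphism of $\overline{\langle A\rangle}$ onto $F$; then $\theta$, viewed into $F$, is a continuous right inverse of $\varphi$, which forces $\varphi$ to be \emph{onto}. The discontinuity is then injected only through the countable part: since $F$ maps continuously onto $\prod_{\mathbb{N}}C$ for a nontrivial $C\in\CC$ (exactly as in the proof of Lemma \ref{basic lemma}), and a nonprincipal ultrafilter $\FF$ on $\mathbb{N}$ produces a dense non-open subgroup $T_{\FF}$ with $(\prod_{\mathbb{N}}C)/T_{\FF}\cong C$ (Example \ref{main example}), I would route the images of the $b_n$ through $T_{\FF}$. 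The effect is that the preimage under $\varphi$ of a suitable open subgroup becomes a dense, non-open subgroup of $F$, so $\varphi$ cannot be continuous, while the behaviour on $\overline{\langle A\rangle}$ that guarantees surjectivity is left untouched.

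The remaining work, and the technical heart of the argument, is to verify that these prescriptions genuinely patch into a single compatible family $(\bar\varphi_U)$, i.e.\ into an honest abstract homomorphism; one cannot appeal to the universal property of the free pro-$\CC$ product here, since that governs only continuous maps, so the coherence must be checked directly on the inverse system of finite quotients. This is where I expect to lean on Proposition \ref{minimal normal} and on the abundance of abstract solutions in Theorem \ref{MainProp}, extending the partially specified maps coherently up the tower of quotients. The main obstacle is precisely the simultaneous reconciliation of surjectivity with noncontinuity: each $\bar\varphi_U$ being onto yields only a \emph{dense} image, and an abstract homomorphic image of a profinite group need not be closed, so density by itself is not enough. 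The continuous right inverse coming from the $\overline{\langle A\rangle}$-factor is what upgrades dense image to full surjectivity without constraining the $b_n$-directions in which the ultrafilter forces discontinuity, and confirming that compatibility survives this mixed prescription — and that the resulting kernel is indeed non-open — is the step I would expect to demand the most care.
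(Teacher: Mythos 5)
Your plan defers precisely the step that constitutes the entire difficulty, and the tools you name cannot close it. Theorem \ref{MainProp} supplies abstract solutions only for embedding problems whose given map $\varphi$ is \emph{continuous}; but in your level-by-level extension up the tower of finite quotients, the moment you inject the ultrafilter prescription through $T_{\FF}$, the partially built map $\bar\varphi_V:F\to F/V$ is noncontinuous, so every subsequent lifting problem across $F/U\to F/V$ has a noncontinuous bottom map. Solvability of finite embedding problems over noncontinuous epimorphisms is exactly the kind of statement that Proposition \ref{free_completion_of_abstract_group} ties to the paper's Main Question, and the paper establishes it only for special varieties --- so at the crucial step you would be assuming something the paper itself cannot prove for general $\CC$. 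Two further structural problems compound this: since the abstract subgroup generated by $\overline{\langle A\rangle}$ and the $b_n$ is merely dense, prescribing $\varphi$ there pins down nothing off that subgroup, so there is no canonical candidate for $\bar\varphi_U$ to lift; and the poset of open normal subgroups is directed but not a chain, so a transfinite construction needs a compactness argument over sets of compatible lifts --- where one should note that ``onto $F/U$'' is an open, not closed, condition in the product topology on $(F/U)^{F}$ (its complement, having image in some proper subgroup, is the closed one), so surjectivity can evaporate in inverse limits. Your continuous right inverse $\theta$ does repair that last point, but only that one; the coherence of the mixed prescriptions remains unproved, and I see no route to it from the results available in the paper.

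The paper avoids patching altogether, and you could salvage your argument by adopting its device. Since $\omega_0\bigl(\prod_i G_i\bigr)=\mm$, where $G_i$ ranges over all finite continuous quotients of $F=F_{\CC}(\mm)$, freeness gives a \emph{continuous} epimorphism $F\twoheadrightarrow \prod_i G_i$. Lemma \ref{non continuous epimorphism from the product} then constructs a noncontinuous epimorphism $\phi:\prod_i G_i\to {\invlim}_i\, G_i\cong F$ in one stroke: fix a nonprincipal ultrafilter on the index set containing all tails $[i,\infty)$, and let $\phi_i$ send a tuple $(g_j)$ to the unique $x\in G_i$ with $\{j\geq i:\varphi_{ji}(g_j)=x\}$ in the ultrafilter. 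Each $\phi_i$ is a homomorphism, the family is compatible (both facts follow from closure of the ultrafilter under finite intersections), and on the diagonal copy of $\invlim G_i$ inside the product each $\phi_i$ restricts to the coordinate projection, so $\phi$ is onto. The composite $F\to\prod_i G_i\to F$ is then noncontinuous because a continuous surjection of compact Hausdorff spaces is a quotient map: continuity of the composite would force continuity of $\phi$. In effect, the ultrafilter limit performs globally, with an elementwise verification, the coherent system of choices your scheme would have to make level by level; once it is used, your basis decomposition and right-inverse bookkeeping become unnecessary.
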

Before we can prove it, we need the following lemma:
	\begin{lem}\label{non continuous epimorphism from the product}
	Let $G\cong{\invlim}_I\{G_i,\varphi_{ij}\}$ be an infinite profinite group, expressed as an inverse limit of finite groups. Then there exists a noncontinuous epimorphism $\phi:\prod G_i\to G$.
\end{lem}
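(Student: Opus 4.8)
The plan is to exploit the fact that an infinite profinite group, being a compact totally disconnected space of cardinality continuum or more, is far from freely generated as an abstract group: its abstract cardinality exceeds its local weight, and this slack is exactly what lets us build a noncontinuous surjection. First I would fix a countable cofinal descending chain (or more generally the whole inverse system) and observe that the kernels $N_i = \ker(G \to G_i)$ form a neighborhood basis of the identity, so $\bigcap_I N_i = 1$. The key is to produce, on the abstract group $\prod_I G_i$, a surjection onto $G$ whose kernel is \emph{not} closed; equivalently, I want a normal subgroup $K \unlhd \prod_I G_i$ of the right index-lattice so that $(\prod_I G_i)/K \cong G$ but $K$ is not an intersection of open subgroups.

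The cleanest route I would take is to use an ultrafilter/diagonal construction analogous to Example \ref{main example}. Consider the natural continuous projection $\pi:\prod_I G_i \to \prod_I G_i$ and compose with the structure maps to land in each $G_i$; the issue is assembling these into a map onto the \emph{inverse limit} $G$ rather than onto a single $G_i$. I would instead argue as follows: since $G$ is infinite profinite, by \cite[Proposition 2.4.4]{ribes2000profinite} it has a set of topological generators converging to $1$, say of cardinality $\omega_0(G) = \kappa$; meanwhile $\prod_I G_i$ surjects continuously onto $G$ via the canonical map (as $G$ is a quotient of the product in the inverse system), call this continuous surjection $\rho$. The abstract kernel of $\rho$ has index $|G|$. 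The plan is then to compose $\rho$ with a noncontinuous \emph{automorphism} or to precompose with a noncontinuous self-surjection of the product — but the honest mechanism is to find a proper dense finite-index-free subgroup structure: because $\prod_I G_i$ has a proper dense abstract subgroup $D$ of the same index structure as one of the $N_i$ (again by the ultrafilter construction of Example \ref{main example} applied to a fixed nontrivial factor), the quotient map modulo a \emph{non-open} normal subgroup that still intersects to give $G$ yields a noncontinuous epimorphism. Concretely, I would build $\phi$ so that $\phi = \rho$ on a dense subgroup but $\phi$ fails continuity because its kernel, while of the correct cardinality, is not the intersection of open subgroups.

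The step I expect to be the genuine obstacle is ensuring the constructed map is \emph{onto} $G$ while its kernel is simultaneously \emph{not closed}; surjectivity is easy for continuous maps and nonclosedness is easy to arrange for maps into a single finite factor, but combining both into one epimorphism onto the full inverse limit $G$ requires care. I would handle this by a transfinite/back-and-forth enumeration: enumerate a generating set of $G$ converging to $1$ and, using the freeness slack in the abstract product (its abstract rank is $2^{|I|}$-like, far exceeding $\omega_0(G)$), lift generators one at a time so that the resulting homomorphism hits every generator of $G$ (forcing surjectivity by density, exactly as in the proof of Proposition \ref{free_completion_of_abstract_group} where a dense image into a profinite group need not be everything, so I must instead verify surjectivity directly from the lifting) while routing an infinite ``tail'' through a nonprincipal ultrafilter so the kernel swallows a dense-but-not-open subgroup. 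The verification that this kernel is not open — hence $\phi$ not continuous — then reduces to the nonprincipality of the ultrafilter, as in Lemma \ref{basic lemma}.
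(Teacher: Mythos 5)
Your proposal contains a fatal false premise and then never actually constructs the map. The premise: you assert that ``$\prod_I G_i$ surjects continuously onto $G$ via the canonical map (as $G$ is a quotient of the product in the inverse system)'' and call this $\rho$. No such canonical map exists: the inverse limit $G=\invlim_I\{G_i,\varphi_{ij}\}$ is canonically a \emph{closed subgroup} of $\prod_I G_i$ (the compatible tuples), not a continuous quotient of it, and the universal maps go $G\to G_i$, not $\prod G_i\to G$. Producing \emph{any} epimorphism $\prod G_i\to G$ is exactly the content of the lemma, so your plans to ``compose $\rho$ with a noncontinuous automorphism'' or to precompose with a noncontinuous self-surjection are circular. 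The repair mechanisms you sketch also fail: the ultrafilter construction of Example \ref{main example} applied to a fixed nontrivial factor only produces finite-index dense kernels of maps onto a single \emph{finite} group, never onto the infinite limit $G$; and the transfinite ``lift generators one at a time'' step presupposes a freeness in the abstract group $\prod_I G_i$ that it does not have --- one cannot define a homomorphism out of this (highly non-free, relation-laden) group by prescribing images on a generating set, and no verification that your assignments respect the relations is offered. You correctly flag that combining surjectivity onto $G$ with a non-closed kernel is the genuine obstacle, but the proposal does not overcome it.

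The missing idea, which is how the paper proceeds (following Proposition 2.1 of \cite{klopsch2016abstract}), is to put the ultrafilter on the \emph{index set} $I$ and use it to take limits along the directed order, rather than inside a single coordinate. Choose a nonprincipal ultrafilter $\FF$ on $I$ containing every tail $[i,\infty)=\{j\in I: j\geq i\}$ (these sets are infinite and have the finite intersection property, since $I$ is directed without maximal element). For each $i$ define $\phi_i:\prod_j G_j\to G_i$ by sending $(g_j)$ to the unique $x\in G_i$ with $\{j\geq i:\varphi_{ji}(g_j)=x\}\in\FF$; finiteness of $G_i$ gives existence and uniqueness of $x$, the ultrafilter property makes each $\phi_i$ a homomorphism, and membership of the tails in $\FF$ makes the family $(\phi_i)$ compatible with the maps $\varphi_{ji}$, so they assemble into $\phi:\prod G_i\to\invlim G_i= G$. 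Surjectivity then comes for free: $\phi$ restricts to the identity on the embedded copy of $\invlim G_i$ inside $\prod G_i$, since for a compatible tuple $(g_j)$ one has $\{j\geq i:\varphi_{ji}(g_j)=g_i\}=[i,\infty)\in\FF$ for every $i$. Noncontinuity follows from nonprincipality: $\FF$ contains all cofinite sets, so $\phi$ annihilates every finitely supported tuple, i.e.\ it vanishes on the dense restricted direct sum, while mapping onto the nontrivial group $G$ --- impossible for a continuous homomorphism into a Hausdorff group. This simultaneous handling of surjectivity and noncontinuity by a single ultralimit is precisely what your construction lacks.
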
 
\begin{proof}
	The proof is based on a similar argument shown in Proposition 2.1 \cite{klopsch2016abstract}. Let $\mathcal{F}$ be a non principal ultrafilter on $I$ which contains the set $[i,\infty), i\in I]\}$ where $[i,\infty)=\{j\in I:i\leq j\}$. Notice that since $I$ is an infinite directed poset with no maximal element, this set satisfies the finite intersection property i.e, every finite intersection contains a set of the form $[i,\infty)$, and in addition all these sets are infinite, so it can be extended to a non principal ultrafilter. Now for each $i\in I$ we define a projection $\phi_i;\prod G_i\to G_i$ by the following: Let $(g_i)\in prod G_i$. For each $x\in G_i$ we define $A_{x}$ to be the set of all $G_j$ such that $j\geq i$ and $\varphi_{ji}(g_j)=x$. This creates a finite partition of $[i,\infty)$ which belongs to $\mathcal{F}$, so there exists exactly one $x\in G_i$ such that $A_x\in \mathcal{F}$. So we define $\phi_i(g_i)=x$.
	
	\begin{claim}
		For every $i$, $\phi_i$ is an homomorphism.
	\end{claim}
	\begin{proof}
		Assume $\phi_i(g_i)=x$ and $\phi_i(g'_i)=y$. Then the set of all indexes $j$ such $j\geq i$ and $varphi_{ji}(g_j)=x$ and  the set of all indexes $j$ such $j\geq i$ and $varphi_{ji}(g'_j)=y$ both belongs to $\mathcal{F}$, and thus their intersection. But every $\varphi_{ji}$ is a homomorphism, so for every $j$ in the intersection $\varphi_{ji}(g_jg'_j)=xy$, and thus $\phi_i((g_j)(g'_j))=\phi_i(g_jg'_j)=xy$.  
	\end{proof}
	\begin{claim}
		The maps $\phi_i, \forall i\in I$ are compatible with $\{\varphi_{ji},j\geq i\}$.
	\end{claim}
	\begin{proof}
		Assume $j>i$ and $\phi_j(g_j)=x\in G_j$. Then it means that the set of al $k>j$ such that $\varphi_{kj}(g_k)=x$ belongs to $\mathcal{F}$. Since the maps $\varphi_{ji}$ are compatible, we get that $\varphi_{ki}(g_k)=\varphi_{ji}(x)$. So the set of all indexes $k\geq i$ which satisfy $\varphi_{ki}(g_k)=\varphi_{ji}(x)$, and thus $\phi_i(g_j)=\varphi_{ji}(x)=\varphi_{ji}(\phi_j(g_j))$. 
	\end{proof}
	Thus, the homomorphisms $\phi_i:\prod G_i\to G_i$ induce an homomorphism $\phi:\prod G_i\to \invlim \{G_i,\varphi_{ji}\}$.
	\begin{claim}
		$\phi$ is an epimorphism.
	\end{claim}
	\begin{proof}
		Recall that $\invlim {G_i, \varphi_{ji}}$ is in fact the subgroup of $\prod G_i$ of all elements $(g_i)$ such that for all $j>i$, $\varphi(g_j)=g_i$. So, let $(g_i)\in \invlim {G_i, \varphi_{ji}}$. We claim that $(g_i)\in \prod G_i$ is a source. Indeed, For every $j\geq i$, $\varphi_{ji}(g_j)=g_i$, and the set of all $j\geq i$ belongs to $\mathcal{F}$. Thus, $\phi_i((g_j))=g_i$ and $\phi((g_i))=g_i$.
	\end{proof}
	So, $\phi$ is the required epimorphism.
\end{proof}
\begin{proof}[Proof of Proposition \ref{non continuou inner epimorphism}]
Let $F_{\CC}(\mm)$ be a free pro-$\CC$ group. As we stated before, its local weight equals $\mm$, which means it has $\mm$ finite continuous quotients. Let $G=\prod G_i$ be the product of all its finite quotients. Notice that there are compatible epimorphisms between these groups which make $F_{\CC}(\mm)$ an inverse limit of $\invlim G_i$. By Lemma \ref{non continuous epimorphism from the product} there exists a noncontinuous epimorphism $\phi:\prod G_i\to  \invlim G_i\cong F_{\CC}(\mm)$. Now since $\omega_o(G)=\mm$ - which can be seen easily by forming $G$ as an inverse limit of finite products- $F_{\CC}(\mm)$ projects over $G$. Let $f=\psi\circ \varphi$ be the composition of these epimorphisms. The only thing left to show is that $f$ is noncontinuous. But this is immediate since $\varphi:F_{\CC}(\mm)\to G$ is a continuous epimorphism between profinite spaces, and thus a quotient map. 
\end{proof}

\bibliographystyle{plain}
\bibliography{references,../../Results_for_PHD/references}
\end{document}